\theoremstyle{plain} 
\newtheorem{thm}{Theorem}[section]
\newtheorem{lem}[thm]{Lemma}
\newtheorem{prop}[thm]{Proposition}
\newtheorem{cor}[thm]{Corollary}
\theoremstyle{definition}
\newtheorem{defn}[thm]{Definition}
\newtheorem*{thmnn}{Theorem}
\newtheorem{ques}{Question}
\theoremstyle{remark}
\newtheorem{rem}[thm]{Remark}
\newtheoremstyle{TheoremNum}
        {\topsep}{\topsep}              
        {\itshape}                      
        {}                              
        {\bfseries}                     
        {.}                             
        { }                             
        {\thmname{#1}\thmnote{ \bfseries #3}}
    \theoremstyle{TheoremNum}
    \newtheorem{thmn}{Theorem}
\DeclareMathOperator{\kernel}{ker}
\DeclareMathOperator{\tor}{Tor}
\DeclareMathOperator{\torlen}{TorLen}
\DeclareMathOperator{\ord}{o}
\DeclareMathOperator{\stable}{stable}
\DeclareMathOperator{\relations}{relations}
\DeclareMathOperator{\out}{Out}
\DeclareMathOperator{\defi}{def}
\newcommand{\nsub}{\mathrel{\unlhd}}
\title{Torsion, torsion length and finitely presented groups}
\author{Maurice Chiodo and Rishi Vyas}
\begin{document}

\let\thefootnote\relax\footnotetext{2000 \textit{AMS Classification:} 20E06, 20F05, 20F06, 20F10, 20F65.}
\let\thefootnote\relax\footnotetext{\textit{Keywords:} Torsion, torsion length, Higman Embedding Theorem.}
\let\thefootnote\relax\footnotetext{The first author was partially supported by the Swiss National Science Foundation grant FN PP00P2-144681/1. This work is part of a project that has received funding from the European Union's Horizon 2020 research and innovation programme under the Marie Sk\l{}odowska-Curie grant agreement No.~659102.}
\let\thefootnote\relax\footnotetext{The second author was partially supported by Israel Science Foundation grants 170/12 and 253/13, the Center for Advanced Studies in Mathematics at Ben-Gurion University of the Negev, and by the Israel Council for Higher Education.}

\begin{abstract}
We show that a construction by Aanderaa and Cohen used in their proof of the Higman Embedding Theorem preserves torsion length. We give a new construction showing that every finitely presented group is the quotient of some $C'(1/6)$ finitely presented group by the subgroup generated by its torsion elements. We use these results to show there is a finitely presented group with infinite torsion length which is $C'(1/6)$, and thus word-hyperbolic and virtually torsion-free.
\end{abstract}

\maketitle

\section{Introduction}

It is well known that the set of torsion elements in a group $G$, $\tor(G)$, is not necessarily a subgroup. One can, of course, consider the subgroup $\tor_{1}(G)$ \emph{generated} by the set of torsion elements in $G$; this subgroup is always normal in $G$. 

The subgroup $\tor_{1}(G)$ has been studied in the literature, with a particular focus on its structure in the context of $1$-relator groups. For example, suppose $G$ is presented by a $1$-relator presentation $P$ with cyclically reduced relator $R^{k}$ where $R$ is not a proper power, and let $r$ denote the image of $R$ in $G$. Karrass, Magnus, and Solitar proved (\cite[Theorem 3]{KaMaSo})  that $r$ has order $k$ and that every torsion element in $G$ is a conjugate of some power of $r$; a more general statement can be found in \cite[Theorem 6]{Sch}. As a immediate corollaries, we see that $\tor_{1}(G)$ is precisely the normal closure of $r$, and that $G/\tor_{1}(G)$ is torsion-free. 

More generally, the manner in which $\tor_{1}(G)$ is impacted by the \emph{deficiency} $\defi(G)$ of a finitely presented group $G$ has also been investigated. The deficiency of $G$ is the maximum value of $m-n$, where $m$ and $n$ are the number of generators and relators respectively as we range over all finite presentations of $G$.  In \cite[Corollary 3.6]{BerHil}, Berrick and Hillman proved that if $G$ is a finitely presentable group with $\defi(G)>0$, and $\tor_{1}(G)$ is either finitely generated or locally finite, then $\tor_{1}(G)$ is actually finite; again, in this situation $G/\tor_{1}(G)$ is torsion-free.  They claim that the question of whether $\tor_{1}(G)$ is necessarily \emph{trivial} under these hypotheses is open; using a result of Karras and Solitar \cite[Main Theorem]{KarSol} one immediately sees that this triviality is indeed the case when $G$ is presented by a $1$-relator presentation.

In both cases described above, the quotient $G/\tor_{1}(G)$ is torsion-free. Unfortunately, this is not always the case. Consider, for example, the group $C$ presented by the following presentation: $\langle x, y, z\ \vert \  x^{3}=e, y^{3}=e, xy=z^{3} \rangle$; it can be shown that $C$ is a finitely presented word-hyperbolic group (\cite[Proposition 3.5]{ChiVya}), but that $C/\tor_{1}(C) \cong \mathbb{Z}/3\mathbb{Z}$ (\cite[Proposition 3.1]{ChiVya}). 

We can, however, iterate the process used to construct $\tor_{1}(G)$ to produce an ascending chain of normal subgroups $\tor_{1}(G)\leq \tor_{2}(G) \leq \ldots$ of $G$. For finite $n\in \mathbb{N}$, we define $\tor_{n+1}(G)$ via $\tor_{n+1}(G)/\tor_{n}G = \tor_{1}(G/\tor_{n}(G))$; we define $\tor_{\omega}(G):=\bigcup_{n \in \mathbb{N}}\tor_{n}(G)$. The ordinal for which this chain stabilises is called the \emph{torsion length} of $G$ and denoted by $\torlen(G)$. It turns out that $ G/\tor_{\omega}(G)$ is the universal torsion-free quotient of $G$: it is torsion-free, and all other torsion-free quotients uniquely factor through it. Thus $\torlen(G)$ is  always bounded above by $\omega$; this bound is attained when the chain mentioned above does not stabilise at any finite stage.  Intuitively, $\torlen(G)$ is the minimal number of times we need to `kill off' torsion to get a torsion-free quotient of $G$.
 
The notion of torsion length first appeared, independently, in both \cite{Cirio et al} and our earlier work \cite{ChiVya}. In \cite{Cirio et al}, Cirio \emph{et al.}~defined the \emph{torsion degree} of a quantum group (here, quantum groups are C*-algebras equipped with a suitable comultiplication).  The definition of torsion length aligns with torsion degree when a group is viewed as a quantum group via its associated C*-algebra.  Further, they defined the notion of the ``connected component at the identity" $Q^{\circ}$ of a quantum group $Q$ 
and remarked that for an ordinary group $G$ (again viewed as a quantum group via its associated C*-algebra) this object corresponds to $G/\tor_{\omega}(G)$ (\cite[Example 3.17]{Cirio et al}). They also constructed a descending ordinal indexed family of quantum subgroups $G_{\alpha}$ ``converging" to $G^{\circ}$; again, in the classical situation these objects correspond to the quotients $G/\tor_{\alpha}(G)$.

The quotient $G/\tor_{\omega}(G)$ was first studied in \cite{BrodHow}, where Brodsky and Howie investigated this object (they use the notation $\hat{G}$) for various families of groups. A group is \emph{locally indicable} if every non-trivial finitely generated subgroup admits a surjection onto $\mathbb{Z}$: Brodsky and Howie showed that if a group has deficiency $\defi(G)>0$, then $G/\tor_{\omega}(G)$ is locally indicable \cite[Theorem 3.7]{BrodHow}. They also showed that $G/\tor_{\omega}(G)$ is locally indicable when $G$ is $1$-relator, or $2$-relator with one relator having length $\leq 4$, or $2$-relator with with one relator having length $5$ and the other has length  $\leq 8$, or at most $5$-relator with each relator having length $\leq  3$. These results appear as \cite[Theorems 1.1--1.4]{BrodHow}.

In \cite{ChiVya} we began a preliminary investigation of torsion length. 
One of the main results of that work, which we generalise here in Theorem \ref{gen first paper}, was the following theorem:

\begin{thmnn}{\cite[Theorem 3.3]{ChiVya}}
There is a family of finitely presented groups $\{P_{n}\}_{n \in \mathbb{N}}$ such that:
\\$1$. $P_{n+1} / \tor_{1}(P_{n+1}) \cong P_{n}$,
\\$2$. $\torlen(P_{n})=n$.
\end{thmnn}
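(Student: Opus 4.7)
The plan is to construct the family $\{P_n\}$ explicitly by induction. Let $P_0 := \langle x\rangle \cong \mathbb{Z}$ (torsion-free, so $\torlen(P_0) = 0$), and for $n \geq 1$ set
\[
P_n := \bigl\langle x,\, \tau_1,\, \ldots,\, \tau_n \,\bigm|\, \tau_n^2,\ \tau_i^2 = [\tau_{i+1}, x] \text{ for } 1 \leq i \leq n-1 \bigr\rangle.
\]
The intuition is that $\tau_{n+1}$ plays the role of the new ``top'' torsion generator in $P_{n+1}$, and killing it restores the relation $\tau_n^2 = 1$ in the quotient; at the same time the relation $\tau_n^2 = [\tau_{n+1}, x]$ forces $\tau_n$ itself to have infinite order inside $P_{n+1}$, so the old torsion is ``unfolded'' and no longer contributes to $\tor_1(P_{n+1})$.

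For property (1), the map sending $\tau_{n+1} \mapsto 1$ and fixing the other generators yields a surjection $P_{n+1} \twoheadrightarrow P_n$ whose kernel is exactly $\langle\!\langle \tau_{n+1}\rangle\!\rangle$: killing $\tau_{n+1}$ trivialises the commutator $[\tau_{n+1}, x]$, so the relation $\tau_n^2 = [\tau_{n+1}, x]$ collapses to $\tau_n^2 = 1$, recovering the presentation of $P_n$ verbatim. It then remains to identify $\langle\!\langle \tau_{n+1}\rangle\!\rangle$ with $\tor_1(P_{n+1})$. For this I would realise $P_{n+1}$ as an iterated amalgamated free product over infinite cyclic subgroups: starting from the bottom vertex group $\mathbb{Z} * \mathbb{Z}/2 = \langle x, \tau_{n+1} \mid \tau_{n+1}^2\rangle$, successively adjoin a new torsion-free vertex group $\langle \tau_i\rangle \cong \mathbb{Z}$ (for $i = n, n-1, \ldots, 1$), amalgamating $\langle \tau_i^2\rangle$ with the cyclic subgroup $\langle [\tau_{i+1}, x]\rangle$ inside the piece already built.

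For each amalgamation to be well-defined, one checks inductively that $[\tau_{i+1}, x]$ has infinite order at the relevant stage; this is a short computation with normal forms, since the commutator is a cyclically reduced word of syllable length four in the previously constructed amalgamated product. Once the iterated amalgamation is in place, the classical theorem that torsion in an amalgamated free product is conjugate into a vertex group (a standard consequence of Bass--Serre theory) applies: in our tower the only vertex group carrying torsion is the bottom $\mathbb{Z} * \mathbb{Z}/2$, whose torsion is generated by $\tau_{n+1}$. Hence $\tor(P_{n+1})$ consists of conjugates of powers of $\tau_{n+1}$, and $\tor_1(P_{n+1}) = \langle\!\langle \tau_{n+1}\rangle\!\rangle$, completing property (1). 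Property (2) is then immediate by induction: $\torlen(P_{n+1}) = \torlen(P_n) + 1 = n+1$.

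The main obstacle is the torsion analysis in the third paragraph: one must organise $P_{n+1}$ carefully as a tree of amalgamated products, verify non-degeneracy of each amalgamation (i.e., that each $[\tau_{i+1}, x]$ remains of infinite order at its stage), and only then apply Bass--Serre theory to conclude that no spurious torsion appears beyond the conjugates of $\tau_{n+1}$. Once this combinatorial bookkeeping is done, the remaining parts of the argument---the existence of the surjection in property (1) and the induction that yields property (2)---are formal.
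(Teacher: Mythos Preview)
The paper does not contain its own proof of this statement; it is quoted from \cite{ChiVya} as background in the introduction, so there is no in-paper argument to compare against directly. Your construction is correct and gives a clean, self-contained proof. The one step that deserves a little more than ``a short computation'' is the check that $x^{\pm1}$ does not lie in the edge group $\langle[\tau_{i+2},x]\rangle$ at each stage (needed for the syllable-length-four word to be genuinely reduced); a painless way to do this is to use the retraction onto $\mathbb{Z}$ sending $x\mapsto 1$ and every $\tau_j\mapsto 0$, which kills all the commutators but not $x^{\pm1}$. With that in hand, the normal-form argument shows each $[\tau_{i+1},x]$ has infinite order, the amalgamations are nondegenerate, and Bass--Serre theory pins all torsion in $P_{n+1}$ to conjugates of $\tau_{n+1}$ exactly as you outline.

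For comparison, the present paper does supply an alternative route in Section~\ref{hyp1B}: Theorem~\ref{exact tor quot} shows that for any finite presentation $P$ one has $\overline{P_t^{12}}/\tor_1(\overline{P_t^{12}})\cong\overline{P}$ with $\torlen$ increasing by one, so iterating from $\langle x\rangle$ produces a family with the same two properties. That argument replaces your Bass--Serre torsion analysis with small cancellation (Greendlinger's torsion theorem, Lemma~\ref{C 1/6}); it buys word-hyperbolicity and virtual specialness of each group in the family, at the cost of much larger presentations. Your amalgamated-product approach yields far more transparent presentations and is closer in spirit to the original construction hinted at in the remark following Theorem~\ref{exact tor quot} (compare the group $\langle x,y,z\mid x^3,y^3,xy=z^3\rangle$ cited there from \cite{ChiVya}).
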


We then showed that a construction used to prove a classic embedding theorem of Higman, Neumann and Neumann (every countable group embeds into a 2 generator group) preserved torsion length. This fact, used with the theorem mentioned above, allowed us to arrive at the following result:

\begin{thmnn} {\cite[Theorem 3.10]{ChiVya}}
There exists a $2$-generator recursively presented group $Q$ for which $\torlen(Q) = \omega$.
\end{thmnn}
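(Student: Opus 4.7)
The plan is to combine the family $\{P_n\}$ from the preceding theorem into a single countable recursively presented group of torsion length $\omega$, and then to reduce the number of generators to two via the HNN-type embedding of Higman, Neumann and Neumann; the preservation of torsion length under this embedding is the main technical input, and is established in the same paper \cite{ChiVya}.

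First, I would form the countable direct sum $G := \bigoplus_{n \in \mathbb{N}} P_n$. Since the $P_n$ are produced by a uniform construction in \cite{ChiVya}, their generators and relators can be listed recursively in $n$, so the presentation of $G$ obtained by taking all these generators and relators together with the commutators $[x,y]$ for $x \in P_n$, $y \in P_m$, $n \neq m$, is recursive.

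Second, I would verify $\torlen(G) = \omega$. The key observation is that $\tor_1$ commutes with direct sums: an element of $\bigoplus_n H_n$ is torsion iff each coordinate is torsion, so $\tor_1(\bigoplus_n H_n) = \bigoplus_n \tor_1(H_n)$. A straightforward transfinite induction, using that quotients and unions also commute with direct sums, extends this to $\tor_\alpha(G) = \bigoplus_n \tor_\alpha(P_n)$ for every ordinal $\alpha$. Since $\torlen(P_n) = n$, for each finite $k$ the inclusion $\tor_k(G) \subsetneq G$ is proper (witnessed in the $P_{k+1}$ coordinate), so $\torlen(G) \geq \omega$. On the other hand, $\tor_\omega(G) = \bigcup_k \tor_k(G) = \bigoplus_n \tor_\omega(P_n) = \bigoplus_n P_n = G$, giving $\torlen(G) = \omega$ exactly.

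Finally, I would embed $G$ into a $2$-generator group $Q$ using the classical HNN construction of Higman, Neumann and Neumann; because $G$ is recursively presented, so is $Q$. Invoking the fact that this construction preserves torsion length, we conclude $\torlen(Q) = \omega$. The main obstacle is exactly this last preservation step: one must show that $\tor_\alpha(Q) \cap G = \tor_\alpha(G)$ for every ordinal $\alpha$, ruling out both the introduction of new layers of torsion by the HNN relations and the premature collapse of old ones. Handling the transfinite case requires a careful normal form argument for HNN extensions, which is the technical heart of the companion result in \cite{ChiVya}.
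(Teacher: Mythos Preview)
Your outline matches the strategy described here and carried out in \cite{ChiVya}: assemble the $P_n$ into a single countable recursively presented group of torsion length $\omega$, then apply the Higman--Neumann--Neumann two-generator embedding, which is shown in \cite{ChiVya} to preserve torsion length. One small slip: the line $\bigoplus_n \tor_\omega(P_n) = \bigoplus_n P_n$ presumes $\tor_\omega(P_n)=P_n$, which does not follow from $\torlen(P_n)=n$ alone (it would fail if $P_0$ happened to be a nontrivial torsion-free group); the clean fix is to argue directly that $\tor_k(P_{k+1}) \subsetneq \tor_{k+1}(P_{k+1})$, hence $\tor_k(G) \subsetneq \tor_{k+1}(G)$ for every finite $k$, giving $\torlen(G)\geq\omega$, while $\torlen(G)\leq\omega$ is automatic from the definition. (Incidentally, \cite{ChiVya} combines the $P_n$ via a free product rather than a direct sum, but either device works and the argument is the same.)
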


This paper aims to extend \cite[Theorem 3.10]{ChiVya}. In Theorem \ref{txfpg}, we prove the following:

\vspace{3mm}

\begin{thmn} 
There exists a finitely presented group $F$ with $\torlen(F) = \omega$.

\end{thmn}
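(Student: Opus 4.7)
The strategy is to combine two ingredients: the recursively presented $2$-generator group $Q$ with $\torlen(Q) = \omega$ supplied by \cite[Theorem~3.10]{ChiVya}, and a torsion-length-preserving embedding of $Q$ into a finitely presented group. The abstract of this paper already promises such an embedding via the Aanderaa--Cohen proof of the Higman Embedding Theorem; applying it to $Q$ produces a finitely presented group $F$ containing $Q$ with $\torlen(F) = \torlen(Q) = \omega$, which is the desired conclusion.

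The substantive content is therefore the claim that the Aanderaa--Cohen construction preserves torsion length. This construction embeds a recursively presented group $G$ into a finitely presented group through a finite sequence of HNN extensions and amalgamated free products over carefully chosen (typically free) subgroups associated to benign subgroups in the sense of Higman. To analyse each step, I would use the standard structure theorems for such constructions: Britton's Lemma implies that every torsion element of an HNN extension $\langle B, t \mid t^{-1} A t = A' \rangle$ is conjugate into the base $B$, and the normal form theorem for amalgamated free products gives the analogous statement for $B_1 *_C B_2$. From these, one aims to show that for each step producing an extension group $H$ from a base $G$, the quotient $H/\tor_1(H)$ is obtained from $G/\tor_1(G)$ by the same step. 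Iterating this across the finitely many stages of the Aanderaa--Cohen construction, and then transfinitely through the torsion chain, yields $\torlen(F) = \torlen(G)$, and setting $G = Q$ finishes the proof.

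The main obstacle is exactly this compatibility between the construction and the operation $G \mapsto G/\tor_1(G)$. Verifying it requires a careful check that the subgroups used as edge or associated subgroups in the HNN extensions and amalgams intersect the normal subgroup generated by torsion only trivially (or in a tightly controlled way), so that these subgroups descend unchanged (up to isomorphism) to the quotient. Because Aanderaa and Cohen's construction uses quite specific free subgroups with explicit generators, such compatibility is plausible, but the bookkeeping across the several stages of the construction is where the technical substance of the proof lies; a generic Higman-type embedding would not suffice, and this is the reason for selecting the Aanderaa--Cohen variant.
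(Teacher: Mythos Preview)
Your proposal is correct and follows essentially the same approach as the paper: invoke the recursively presented group $Q$ with $\torlen(Q)=\omega$ from \cite[Theorem~3.10]{ChiVya}, and apply the Aanderaa--Cohen Higman embedding, having first verified that this construction preserves torsion length via the HNN torsion theorem together with a check that the associated subgroups meet $\llangle \tor(C)\rrangle$ trivially so that each stage commutes with passing to $G/\tor_1(G)$. One small inaccuracy: the construction as analysed in the paper involves a free product (the group $H_1$) and two successive HNN extensions, but no amalgamated free products; the paper packages your compatibility condition via the notion of a \emph{good} normal subgroup of an HNN base.
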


\vspace{3mm}

 We do this by showing that a particular construction used in a proof of the Higman Embedding Theorem preserves this invariant. 

Let us be more precise. The Higman Embedding Theorem \cite{Hig emb} states that a finitely generated, recursively presented group embeds into a finitely presented group. There are many proofs of this result, but these arguments share a common theme: they are all constructive. One must begin with a finite generating set for the group, and an algorithm that computes its relations, and then explicitly build a finitely presented group from this data.  In this paper we pick a particular construction, due to Aanderaa and Cohen \cite{Cohen2, Cohen3} and presented in \cite{Cohen}, examine it in detail, and conclude that the torsion length of the finitely presented group so constructed is the same as that of the recursively presented group that we started with.

The existence of a finitely presented group with infinite torsion length is then an immediate consequence of \cite[Theorem 3.10]{ChiVya}: take the recursively presented group constructed in \textit{loc}.~\textit{cit}.~and apply the Aanderaa-Cohen construction. 

Section \ref{hyp1B} of this paper is concerned with improving Theorem \ref{txfpg}. The following result appears as  Theorem \ref{fp hyp vspecialB}.

\vspace{3mm}

\begin{thmn}
 There exists a finitely presented word-hyperbolic virtually special group $W$ with $\torlen(W) = \omega$. In particular, $W$ is virtually torsion-free.
\end{thmn}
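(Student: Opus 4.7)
The plan is to combine the finitely presented group $F$ with $\torlen(F) = \omega$ produced by Theorem 4.2 with the new $C'(1/6)$ construction advertised in the abstract (which should appear as Theorem 5.5 / Corollary 4.4 in the body of the paper). Concretely, given any finitely presented group, that construction produces a finitely presented $C'(1/6)$ group $W$ together with an isomorphism $W/\tor_1(W) \cong F$. Applied to our particular $F$, this yields a candidate $W$ and leaves us to verify four properties: (i) $\torlen(W) = \omega$, (ii) $W$ is word-hyperbolic, (iii) $W$ is virtually special, and (iv) $W$ is virtually torsion-free.

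For (i), the chain of torsion subgroups of $W$ begins with $\tor_1(W)$, and quotienting by it returns $F$. The ascending chain $\tor_\alpha(W)/\tor_1(W)$ inside $W/\tor_1(W) \cong F$ matches the ascending chain $\tor_\alpha(F)$ step by step (this is a general property of the $\tor_\alpha$ filtration under quotienting by a term of the filtration). Hence $\torlen(W) = 1 + \torlen(F) = 1 + \omega = \omega$ by ordinal arithmetic. I would spell out the small ordinal bookkeeping, but it is essentially formal once the isomorphism $W/\tor_1(W) \cong F$ is in hand.

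For (ii) and (iii), I would invoke classical and modern small cancellation theory: a finitely presented $C'(1/6)$ group is word-hyperbolic (Greendlinger/Gromov), and by work of Wise (\emph{The structure of groups with a quasiconvex hierarchy}), finitely presented $C'(1/6)$ groups are virtually compact special. Property (iv) is then immediate: a virtually special group has a finite-index subgroup embedding into a right-angled Artin group, which is torsion-free, so $W$ is virtually torsion-free.

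The main obstacle, and the reason this section is nontrivial, lies entirely in producing the $C'(1/6)$ cover $W$ with $W/\tor_1(W) \cong F$; everything else is assembly. That construction must simultaneously (a) realise a prescribed finitely presented quotient, (b) arrange all new defining relators to satisfy the metric small cancellation condition $C'(1/6)$, and (c) ensure that the kernel is precisely $\tor_1(W)$, i.e.\ that the new torsion elements introduced generate the entire kernel of $W \twoheadrightarrow F$. I expect the bulk of the earlier sections of the paper to be devoted to this, for instance by replacing each defining relator of a presentation of $F$ by a product of torsion elements whose orders and interactions can be controlled, and padding these relators with long generic suffixes to force the $C'(1/6)$ condition without disturbing the quotient. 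Once this construction is established as Theorem 5.5, the present theorem follows in a few lines as outlined above.
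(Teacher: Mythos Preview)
Your proposal is correct and follows essentially the same route as the paper: take the finitely presented $F$ with $\torlen(F)=\omega$ from Theorem~4.2, apply the $C'(1/6)$ construction (the paper's $P_t^{12}$, Theorem~5.5) to obtain $W$ with $W/\tor_1(W)\cong F$, and read off hyperbolicity and virtual specialness from the small-cancellation condition. Two small remarks: your ordinal bookkeeping $1+\omega=\omega$ is actually tidier than the paper's ``$\omega+1=\omega$'', and the passage from $C'(1/6)$ to virtually special in the paper uses Wise (cubulation) together with Agol (virtual Haken), not Wise alone.
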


\vspace{3mm}

This is done using small cancellation theory. Of particular importance is the following theorem, whose content is contained in Proposition \ref{powersB} and Theorem \ref{exact tor quot}; this result is also of independent interest.  Combined with  Theorem \ref{txfpg}, it proves  Theorem \ref{fp hyp vspecialB}.

\vspace{3mm}

\begin{thmn}
Let $P=\langle x_{1}, \ldots, x_{m} \ | \ r_{1}, \ldots, r_{n}\rangle$ be a finite presentation with all relators freely reduced, cyclically reduced, and distinct. For any $k \in \mathbb{N}$, define the finite presentation $P^{k}_{t}:=\langle x_{1}, \ldots, x_{m}, t \ | \ (r_{1}t)^{k}, \ldots, (r_{n}t)^{k}, t^{k}\rangle$. Then $P^{k}_{t}$ presents a $C'(2/k)$ small cancellation group. Moreover, for $k \geq 12$, we have $P_{t}^{k}/\tor_{1}(P_{t}^{k})  \cong P $ and so $\torlen(P_{t}^{k})=\torlen(P)+1$.
\end{thmn}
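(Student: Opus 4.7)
My plan is to prove the three assertions in the statement separately: the $C'(2/k)$ small-cancellation bound, the torsion-quotient identification (for $k \geq 12$), and the torsion-length formula.

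\textbf{Small cancellation.} The key observation is that $t$ is a fresh generator not appearing in any $r_i$, so each base word $r_i t$ contains exactly one occurrence of $t$ (at its end) and is therefore not a proper power; the same is trivially true for $t$. I would then analyse the common prefixes (pieces) of distinct elements of the symmetrized closure of $\{(r_1 t)^k, \ldots, (r_n t)^k, t^k\}$: (i) common prefixes of distinct cyclic conjugates of a single $(r_i t)^k$ have length strictly less than $|r_i t|$ by the standard non-proper-power argument (a common prefix of length $\geq |r_i t|$ would force two rotations of $r_i t$ to coincide); (ii) common prefixes of cyclic conjugates of $(r_i t)^k$ and $(r_j t)^k$ with $i \neq j$ are bounded because a long common prefix would contain two or more aligned $t$-letters and the rigid periodicity would then force a cyclic rotation of $r_i$ to equal a cyclic rotation of $r_j$, contradicting distinctness; (iii) common prefixes involving $t^k$ have length at most one. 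Collecting the bounds shows every piece has length strictly less than $2|w|$ where $w$ is the base of the containing relator, giving $C'(2/k)$.

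\textbf{Torsion quotient.} For $k \geq 12$, so that $P_t^k$ is $C'(1/6)$, I would consider the surjection $\phi: P_t^k \twoheadrightarrow P$ induced by $t \mapsto 1$ and $x_i \mapsto x_i$; this is well-defined since the relations $(r_i t)^k$ and $t^k$ map to $r_i^k = 1$ and $1$ in $P$. Its kernel is the normal closure $N := \langle\langle t, r_1, \ldots, r_n\rangle\rangle$ in $P_t^k$. The inclusion $N \subseteq \tor_1(P_t^k)$ is immediate: $t$ and each $r_i t$ have order dividing $k$, so each $r_i = (r_i t) \cdot t^{-1}$ is a product of torsion elements. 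For the reverse inclusion I would invoke the classical structure theorem for torsion in $C'(1/6)$ small cancellation groups (cf.\ Lyndon--Schupp, Ch.~V): every finite-order element is conjugate to a power of a root of some defining relator. Since the relators' roots are exactly $t$ and the $r_i t$ (by the non-proper-power observation), every torsion element of $P_t^k$ lies in $\langle\langle t, r_1 t, \ldots, r_n t\rangle\rangle = N$. Combining both inclusions gives $\tor_1(P_t^k) = \ker \phi$, whence $P_t^k/\tor_1(P_t^k) \cong P$.

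\textbf{Torsion length.} This assertion is formal given the previous identification. The transfinite filtration satisfies $\tor_\alpha(P_t^k)/\tor_1(P_t^k) \cong \tor_{\alpha - 1}(P_t^k/\tor_1(P_t^k)) = \tor_{\alpha - 1}(P)$, so the chain for $P_t^k$ above stage $1$ mirrors the chain for $P$ and stabilizes at $\alpha = \torlen(P) + 1$. Since $t$ is a nontrivial torsion element, $\torlen(P_t^k) \geq 1$, and $\torlen(P_t^k) = \torlen(P) + 1$ follows.

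The principal technical obstacle is the reverse inclusion $\tor_1(P_t^k) \subseteq N$, which rests on the structure theorem for torsion in $C'(1/6)$ groups; the role of the hypothesis $k \geq 12$ is precisely to bring this theorem into play. The cleverness of the construction is that each $r_i = (r_i t) \cdot t^{-1}$ becomes a product of two torsion elements, so the torsion-generated subgroup of $P_t^k$ already contains every $r_i$, and modding out recovers $P$ exactly.
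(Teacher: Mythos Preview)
Your argument is correct and follows essentially the same route as the paper: a piece analysis exploiting $t$ as a marker to get $C'(2/k)$, then Greendlinger's torsion theorem for $C'(1/6)$ groups (the paper's Lemma~\ref{C 1/6}) to identify $\tor_1(\overline{P_t^k})$ with $\llangle t, r_1t,\ldots,r_nt\rrangle$, and finally the formal deduction of the torsion-length formula. Your framing of the torsion quotient via the explicit surjection $\phi:P_t^k\twoheadrightarrow P$ with $t\mapsto 1$ is a slight cosmetic variant of the paper's computation in Lemma~\ref{clever torsion}, but the substance is identical.
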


\vspace{3mm}

A part of the above theorem appeared in the work \cite{BumWis} of Bumagin and Wise; see Remark \ref{Bum remark}.

In Section \ref{quotients} we finish with a discussion of some open problems relating to torsion length and torsion subgroups.

\subsection{Notation}
A presentation $P=\langle X|R\rangle$ is said to be a \emph{recursive presentation} if $X$ is a finite set and $R$ is a recursive enumeration of relations. A group $G$ is said to be \emph{finitely} (respectively,   \emph{recursively}) \emph{presentable} if it can be presented by a  finite (respectively,   recursive) presentation. If $P,Q$ are group presentations denote their free product presentation by $P*Q$: this is given by taking the disjoint union of their generators and relations. If $g_{1}, \ldots, g_{n}$ are elements of a group $G$, we write $\langle   g_{1}, \ldots, g_{n} \rangle$ for the subgroup in $G$ generated by these elements and $\llangle g_{1}, \ldots, g_{n} \rrangle^{G}$ for the normal closure of these elements in $G$. Let $\omega$ denote the smallest infinite ordinal. Let $|X|$ denote the cardinality of a set $X$. If $X$ is a set, let $X^{-1}$ be a set of the same cardinality as and disjoint from $X$ along with a fixed bijection ${*}^{-1}: X \to X^{-1}$. Write $X^{*}$ for the set of finite reduced words on $X \cup X^{-1}$.

\subsection{Acknowledgements}
We thank Henry Wilton, Mark Hagen, Ben Barrett and Alan Logan for their assistance in preparing Section \ref{hyp1B}, and Andrew Glass and Jack Button for general comments and suggestions.

\section{$\tor_{n}(G)$, HNN extensions and Britton's lemma}

 \begin{defn} {\cite[Definition 3.1]{Chiodo3}} \label{defoftorsion}
Given a group $G$, inductively define $\tor_{n}(G)$ as follows:
\[
\tor_{0}(G):=\{e\},
\]
\[
\tor_{n+1}(G):=\llangle\ \{g \in G\ |\ g\tor_{n}(G) \in \tor \big( G/\tor_{n}(G)\big) \}\ \rrangle ^{G},
\]
\[
\tor_{\omega}(G):=\bigcup_{n \in \mathbb{N}}\tor_{n}(G).
\]
\end{defn}

Observe that $\tor_{i+1}(G)/\tor_{i}(G)=\tor_{1} \big( G/\tor_{i}(G) \big)$.

\begin{lem}{\cite[Corollary 3.4]{Chiodo3}}\label{leftadj}
$G/\tor_{\omega}(G)$ is torsion-free. Moreover, if $f:G\rightarrow H$ is a group homomorphism from $G$ to a torsion-free group $H$, then $\tor_{\omega}(G)\leq \ker(f)$, and so $f$ factors through $G/\tor_{\omega}(G)$.
\end{lem}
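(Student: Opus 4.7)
The plan is to prove both claims directly from the inductive definition, using the fact that $\tor_{\omega}(G)$ is the ascending union of a chain of normal subgroups of $G$ and is therefore itself a normal subgroup of $G$, so that $G/\tor_{\omega}(G)$ makes sense.

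For the first assertion, I would take $g \in G$ with $g\tor_{\omega}(G)$ a torsion element of $G/\tor_{\omega}(G)$, so that $g^{k} \in \tor_{\omega}(G)$ for some positive integer $k$. Because $\tor_{\omega}(G) = \bigcup_{n \in \mathbb{N}} \tor_{n}(G)$ is an increasing union, there is some $n$ with $g^{k} \in \tor_{n}(G)$. Then $g\tor_{n}(G)$ has finite order in $G/\tor_{n}(G)$, so it lies in $\tor(G/\tor_{n}(G))$; by Definition \ref{defoftorsion} this forces $g \in \tor_{n+1}(G) \subseteq \tor_{\omega}(G)$, proving the quotient is torsion-free.

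For the universal property, let $f: G \to H$ be a homomorphism to a torsion-free group $H$. I would show by induction on $n \in \mathbb{N}$ that $\tor_{n}(G) \leq \ker(f)$. The base case $n=0$ is trivial. For the induction step, assuming $\tor_{n}(G) \leq \ker(f)$, the homomorphism $f$ descends to $\bar{f}: G/\tor_{n}(G) \to H$. Because $H$ has no nontrivial torsion, $\bar{f}$ annihilates every element of $\tor(G/\tor_{n}(G))$, which means $f$ sends every generator of $\tor_{n+1}(G)$ (prior to normal closure) to the identity. Since $\ker(f)$ is normal, it contains the normal closure and thus $\tor_{n+1}(G) \leq \ker(f)$. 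Taking the union over $n$ yields $\tor_{\omega}(G) \leq \ker(f)$, and hence $f$ factors uniquely through the projection $G \twoheadrightarrow G/\tor_{\omega}(G)$.

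I do not expect any substantive obstacle here: the argument is a direct unwinding of the inductive definition, and the only point worth flagging is that the limit step $\tor_{\omega}(G)$ is a normal subgroup (as a nested union of normal subgroups), which is what allows the quotient $G/\tor_{\omega}(G)$ and the torsion-testing argument in the first paragraph to make sense.
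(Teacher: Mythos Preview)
Your argument is correct: both parts unwind directly from Definition~\ref{defoftorsion}, and the only subtlety you identify --- that $\tor_{\omega}(G)$ is normal as a nested union of normal subgroups --- is handled properly. Note, however, that the paper does not supply its own proof of this lemma; it is quoted from \cite[Corollary~3.4]{Chiodo3} without argument, so there is nothing in the paper to compare your approach against. Your proof is the standard one and would serve well as a self-contained justification.
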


\begin{defn}{\cite[Definition 2.5]{ChiVya}} \label{tordefn}
The \emph{Torsion Length} of $G$, $\torlen(G)$, is the smallest ordinal $n$ such that $\tor_{n}(G)=\tor_{\omega}(G)$. 
\end{defn}

HNN extensions play a critical role in this paper; we briefly introduce them here.

\begin{defn}
Let $G$ be a group, and suppose there are isomorphisms $\varphi_{i}:A_{i}\to B_{i}$ for $1\leq i\leq n$, where $A_{i}$ and $B_{i}$ are subgroups of $G$. Define the \emph{HNN extension} $G*_{\varphi_{1}, \ldots, \varphi_{n}}$ with stable letters $t_{1},\ldots,t_{n}$ by
\[
 G*_{\varphi_{1}, \ldots, \varphi_{n}}:= (G*F_{n})/\llangle \{t_{i}^{-1}at_{i}\varphi_{i}(a^{-1}) \ | \ a \in A_{i}, 1\leq i\leq n\} \rrangle^{G*F_{n}},
\]
where $\{t_{1}, \ldots, t_{n}\}$ is a free generating set of $F_{n}$.

If $\varphi_{i}=\operatorname{id}_{A_{i}}$ for all $1\leq i\leq n$, we write $G*_{A_{1},\ldots, A_{n}}$ for $G*_{\varphi_{1}, \ldots, \varphi_{n}}.$
\end{defn}

\begin{defn}
Let $G*_{\varphi_{1}, \ldots, \varphi_{n}}$ be an HNN extension with stable letters $t_{1}, \ldots, t_{n}$. Then a $t_{i}$-\emph{pinch} is a word of the form  $t_{i}^{-1}gt_{i}$ where $g \in A_{i}$ or $t_{i}gt_{i}^{-1}$ where $g \in B_{i}$.
\\A word $w$ is said to be \emph{reduced} if no subword of $w$ is a $t_{i}$-pinch for any $i$. 
\end{defn}

\begin{thm}[Britton's lemma, {\cite[Theorem 11.81]{Rot}}]\label{Brit lem 2}
Let $H=G*_{\varphi_{1}, \ldots, \varphi_{n}}$ be an HNN extension with stable letters $t_{1}, \ldots, t_{n}$, and let $w$ be a word in $H$. If $w=e$ in $H$, then  $w$ contains a $t_{i}$-pinch as a subword, for some $i$.
\end{thm}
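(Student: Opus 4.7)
The plan is to prove the contrapositive: every reduced word represents a non-trivial element of $H$. The standard route is to exhibit a unique normal form for elements of $H$ by constructing a faithful action of $H$ on the set of such normal forms. A reduced word with at least one stable letter then produces a non-trivial normal form, hence is not the identity; a reduced word lying entirely in $G$ is handled by the (subsidiary) fact that $G$ embeds into $H$, which falls out of the same construction.

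Concretely, I would first fix, for each $i$, a left transversal $T_{i}$ of $A_{i}$ in $G$ and a left transversal $S_{i}$ of $B_{i}$ in $G$, each containing $e$. Declare a tuple $(g_{0}, t_{i_{1}}^{\epsilon_{1}}, g_{1}, \ldots, t_{i_{k}}^{\epsilon_{k}}, g_{k})$ to be a \emph{normal form sequence} if $g_{0} \in G$ is arbitrary, $g_{j} \in S_{i_{j}}$ when $\epsilon_{j} = +1$ and $g_{j} \in T_{i_{j}}$ when $\epsilon_{j} = -1$ (for $1 \leq j \leq k$), and no $t_{i}$-pinch occurs. Existence of a normal form for any word representing an element of $H$ is by induction on length: one eliminates pinches using the HNN relations $t_{i}^{-1} a t_{i} = \varphi_{i}(a)$ and $t_{i} b t_{i}^{-1} = \varphi_{i}^{-1}(b)$, and uses the coset decompositions $G = \bigsqcup a A_{i} = \bigsqcup b B_{i}$ to pull group elements into the appropriate transversal, absorbing the leftover factor into the neighbouring $g_{j}$.

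For uniqueness, I would define an action of the free product $G * F_{n}$ on the set $N$ of normal form sequences: elements of $G$ act by modifying $g_{0}$ via left multiplication (leaving all other syllables alone), and each $t_{i}$ acts by prepending $t_{i}$ and re-normalizing, splitting the resulting leading $G$-element via the coset decomposition, folding into an existing leading syllable if a pinch is created, or introducing a new leading syllable otherwise (and dually for $t_{i}^{-1}$). One then checks that each HNN defining relator $t_{i}^{-1} a t_{i} \varphi_{i}(a^{-1})$ acts as the identity on $N$, so the action descends to $H$. Applying a reduced word $w$ to the base sequence $(e)$ yields a normal form whose stable-letter count equals that of $w$, forcing $w \neq e$ in $H$ whenever $w$ involves any stable letter; the purely-$G$ case is trivial because the action of $G$ on $N$ restricted to the first coordinate is faithful.

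The main technical obstacle is the finite-but-finicky case analysis showing that the $t_{i}^{\pm 1}$-actions are well-defined and compatible with the HNN relators. This amounts to tracking how each prepending of $t_{i}^{\pm 1}$ interacts with the current leading syllable — whether it creates a pinch (to be folded back using $\varphi_{i}$ or $\varphi_{i}^{-1}$), whether the leading group element already lies in $A_{i}$ or $B_{i}$ (requiring a preliminary coset decomposition before attaching the new stable letter), or whether it simply extends the sequence. Keeping every case consistent so that associativity of the action and the trivial action of the relators both hold simultaneously is the technical heart of the argument.
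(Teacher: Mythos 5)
The paper does not prove Britton's lemma; it is quoted verbatim from Rotman \cite[Theorem 11.81]{Rot} and used as a black box, so there is no internal proof to compare yours against. Your outline is the standard van der Waerden--style argument: exhibit a normal form and verify it via a faithful action of the free product $G * F_{n}$ on the set of normal form sequences, then observe that the defining relators of the HNN extension act trivially so the action descends to $H$, and finally apply a reduced word to the base sequence. This is correct in structure and is essentially what one finds in Lyndon--Schupp or Rotman, so it is an appropriate way to fill the gap.

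Two small points worth fixing if you were to write this out in full. First, you tie the transversal condition on $g_{j}$ to the \emph{preceding} stable letter $t_{i_{j}}^{\epsilon_{j}}$, but then declare $T_{i}$, $S_{i}$ to be \emph{left} transversals. For the ``prepend $t_{i}^{\pm 1}$ and renormalize'' action to work, the element of $B_{i}$ (resp.\ $A_{i}$) must be peeled off the \emph{left} of $g_{0}$ and pushed through the new stable letter via $t_{i}b = \varphi_{i}^{-1}(b)t_{i}$ (resp.\ $t_{i}^{-1}a = \varphi_{i}(a)t_{i}^{-1}$); this requires $g_{0}$ to be written as (subgroup element)$\cdot$(representative), i.e.\ a \emph{right} transversal. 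Either switch to right transversals, or tie the transversal condition on $g_{j}$ to the \emph{following} stable letter and read the word in the other direction --- both conventions appear in the literature, but they must be matched consistently. Second, the statement as quoted in the paper omits the usual hypothesis that $w$ involves at least one stable letter (otherwise ``$w=e$ in $H$'' would follow from $w=e$ in $G$ with no pinch to exhibit); your sketch does, correctly, treat the purely-$G$ case separately via the faithfulness of the $G$-action on the first coordinate, which recovers the embedding $G \hookrightarrow H$. Beyond these conventions, the case analysis you flag as the technical heart is exactly where the real work lies, and your identification of it is accurate.
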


\begin{cor}\label{HNN emb2}
Let $G*_{\varphi_{1}, \ldots, \varphi_{n}}$ be an HNN extension. Then $G$ embeds into $G*_{\varphi_{1}, \ldots, \varphi_{n}}$.
\end{cor}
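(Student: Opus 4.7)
The plan is to derive the corollary as an immediate consequence of Britton's lemma, which is the real content here. There is a natural homomorphism $\iota : G \to G *_{\varphi_1, \ldots, \varphi_n}$ induced by the inclusion $G \hookrightarrow G * F_n$ followed by the quotient map defining the HNN extension. The whole task is to show $\iota$ has trivial kernel.

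So suppose $g \in G$ with $g \neq e$ in $G$, and view $g$ as a (freely reduced) word $w$ in the generators of $G$. Crucially, $w$ contains none of the stable letters $t_1, \ldots, t_n$, so it contains no subword of the form $t_i^{-1} h t_i$ or $t_i h t_i^{-1}$ — that is, $w$ has no $t_i$-pinch for any $i$. If it were the case that $\iota(g) = e$ in $G *_{\varphi_1, \ldots, \varphi_n}$, then $w = e$ in the HNN extension, and Britton's lemma (Theorem \ref{Brit lem 2}) would force $w$ to contain a $t_i$-pinch as a subword, a contradiction. Hence $\iota(g) \neq e$, and $\iota$ is injective.

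There is essentially no obstacle once Britton's lemma is in hand; the only mild subtlety is ensuring that the non-trivial element $g \in G$ is represented by a word that literally has no stable letter occurrences, which is automatic from the definition of the map $\iota$ coming from the free product $G * F_n$. One could add a sentence remarking that this realises $G$ as the natural subgroup of the HNN extension, justifying the customary abuse of notation in which elements of $G$ are identified with their images in $G *_{\varphi_1, \ldots, \varphi_n}$.
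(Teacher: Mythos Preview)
Your argument is correct and is precisely the intended derivation: the paper states this corollary immediately after Britton's lemma with no proof, treating it as an immediate consequence, and your contrapositive argument (a nontrivial $g\in G$ is represented by a word with no stable letters, hence no pinch, hence nontrivial in the HNN extension) is exactly that consequence spelled out.
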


Given a group $G$ we write $\langle G; X|R \rangle$ to denote $(G*F_{X})/\llangle R \rrangle^{G * F_X}$, where $R$ is any subset of $G*F_{X}$.

\section{Good subgroups of HNN extensions}

The notion of a good subgroup was introduced in \cite[Proposition 1.34]{Cohen}, and named so in \cite[Definition 2]{Simpson}.

\begin{defn}
Let $H=G*_{\varphi_{1}, \ldots, \varphi_{n}}$ be an HNN extension. A \emph{good subgroup} of $G$ with respect to the HNN extension $H$ is a subgroup $K \leq G\leq H$ such that $\varphi_{i}(K\cap A_{i}) =K\cap B_{i}$ for all $1 \leq i \leq n$.
\end{defn}

\begin{lem}{{\cite[Proposition 1.34]{Cohen}}}
Let $H:= G*_{\varphi_{1}, \ldots, \varphi_{n}}$ be an HNN extension of $G$ with stable letters $t_{1}, \ldots, t_{n}$.  Suppose $K \leq G$ is a good subgroup of $G$ with respect to the HNN extension $H$, and let $\psi_{i}: K\cap A_{i}\to K\cap B_{i}$ be the restriction of $\varphi_{i}$ to $K\cap A_{i}.$  Let $K'$ be the subgroup of $H$ generated by $K$ and $ t_{1}, \ldots, t_{n}$. Then, the natural map \[\nu_{K}: K*_{\psi_{1}, \ldots, \psi_{n}}\to K'\] is an isomorphism. Moreover, $K' \cap G=K$.
\end{lem}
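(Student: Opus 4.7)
The plan is to verify well-definedness and surjectivity of $\nu_K$ (both routine) and then to use Britton's lemma together with the good subgroup hypothesis to establish injectivity of $\nu_K$ and the identity $K' \cap G = K$.

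First, $\nu_K$ is well-defined because the defining relations $t_i^{-1} a t_i = \psi_i(a)$ for $a \in K \cap A_i$ already hold in $H$, since $\psi_i$ is the restriction of $\varphi_i$. Surjectivity is immediate from the definition of $K'$.

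For injectivity, I would induct on the number of stable letters in a word $w$ over $K \sqcup \{t_1^{\pm 1}, \ldots, t_n^{\pm 1}\}$ that maps to $e$ in $H$. If $w$ has no stable letters then $w \in K \leq G$, and by Corollary \ref{HNN emb2} $G$ embeds in $H$, so $w = e$ in $K$. Otherwise, Britton's lemma applied in $H$ supplies a pinch subword of the form $t_i^{-1} u t_i$ with $u \in A_i$ (or $t_i u t_i^{-1}$ with $u \in B_i$). Since the letters of $w$ between any two consecutive stable letters all come from $K$, we have $u \in K$, and hence $u \in K \cap A_i$ (resp.~$u \in K \cap B_i$). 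The good subgroup hypothesis $\varphi_i(K \cap A_i) = K \cap B_i$ then certifies that the same subword is a $t_i$-pinch in $K*_{\psi_1,\ldots,\psi_n}$ under $\psi_i$, and that reducing the pinch replaces it by $\psi_i(u)$ (or $\psi_i^{-1}(u)$), an element of $K$. The resulting word is strictly shorter in stable letters and still equals $e$ in $H$, so the induction closes.

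For the identity $K' \cap G = K$, the inclusion $K \subseteq K' \cap G$ is clear. Conversely, for $g \in K' \cap G$ expressed as $g = w$ in $H$ with $w$ a word over $K \sqcup \{t_j^{\pm 1}\}$, consider $wg^{-1} = e$ in $H$. Since $g^{-1} \in G$ contributes no stable letters, any pinch supplied by Britton's lemma sits entirely inside $w$, and the same pinch-reduction argument strips stable letters from $w$ one at a time while remaining within words on $K \sqcup \{t_j^{\pm 1}\}$ and representing $g$ in $H$. After finitely many reductions all stable letters are gone and the remaining word lies in $K$ and equals $g$ in $H$, whence $g \in K$. The main obstacle is verifying that pinch reductions can be performed inside $K$ itself: this is precisely what the good subgroup property provides, since without $\varphi_i(K \cap A_i) = K \cap B_i$ the image $\varphi_i(u)$ of some $u \in K \cap A_i$ could escape $K$, derailing the inductive step.
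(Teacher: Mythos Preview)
Your argument is correct and is the standard way to prove this result. Note, however, that the paper does not supply its own proof of this lemma: it is simply quoted from \cite[Proposition 1.34]{Cohen}, so there is nothing in the paper to compare against. Your Britton's-lemma reduction (find a pinch in $H$, observe that the intervening element lies in $K$ and hence in $K\cap A_i$ or $K\cap B_i$, reduce using $\psi_i$, and induct on the number of stable letters) is precisely the argument one finds in Cohen's text, and your treatment of $K'\cap G=K$ via the same pinch-stripping on $wg^{-1}$ is likewise standard. One cosmetic point: in your final paragraph you describe the good subgroup condition as guarding against $\varphi_i(u)$ ``escaping'' $K$; this is the content of the forward inclusion $\varphi_i(K\cap A_i)\subseteq K\cap B_i$, but the reverse inclusion is equally essential, since the pinch $t_i u t_i^{-1}$ with $u\in K\cap B_i$ requires $\varphi_i^{-1}(u)\in K$ for the reduction to stay inside $K$.
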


We now study good subgroups which are normal.

\begin{defn}\label{good pieces}
  Let $H:= G*_{\varphi_{1}, \ldots, \varphi_{n}}$ be an HNN extension of $G$ with stable letters $t_{1}, \ldots, t_{n}$. Let $K \nsub G$ be a good subgroup of $G$ with respect to the HNN extension $H$.  Let $\overline{\varphi}_{i}:A_{i}/(K \cap A_{i}) \to B_{i}/(K \cap B_{i})$ be the induced isomorphism for each $1 \leq i \leq n$. Define the following HNN extension with stable letters $\overline{t}_{1}, \ldots, \overline{t}_{n}$: \[H_{K}:= (G/K)*_{\overline{\varphi}_{1}, \ldots, \overline{\varphi}_{n}}\]
  There is a surjective homomorphism  \[\phi_{K}: H \twoheadrightarrow H_{K}\] which sends $g\mapsto gK$ for all $g \in G$, and $t_{i} \mapsto \overline{t}_{i}$  for all $1 \leq i \leq n$.
\end{defn}

\begin{lem}\label{good normal lem}
 Let $G$ be a group, and $H:=G*_{\varphi_{1}, \ldots, \varphi_{n}}$ an HNN extension of $G$ with stable letters $t_{1}, \ldots, t_{n}$.  Let $K \nsub G$. Then $K$ is a good subgroup of $G$  with respect to the HNN extension $H$ if and only if  $\llangle K \rrangle^{H} \cap G = K$ in  $H.$

\end{lem}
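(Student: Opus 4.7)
The plan is to prove the two implications separately. The reverse direction reduces to a direct manipulation of HNN relations, while the forward direction can be handled elegantly by passing to the quotient HNN extension $H_K$ constructed in Definition \ref{good pieces}.

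For the reverse direction, I would assume $\llangle K \rrangle^H \cap G = K$ and verify $\varphi_i(K\cap A_i) = K\cap B_i$ for each $i$. Given $k \in K\cap A_i$, the defining HNN relation gives $\varphi_i(k) = t_i^{-1}kt_i$ in $H$, which is a conjugate of an element of $K$ and hence lies in $\llangle K \rrangle^H$. Since $\varphi_i(k) \in B_i \subseteq G$, the hypothesis forces $\varphi_i(k) \in K \cap B_i$, giving $\varphi_i(K\cap A_i) \subseteq K\cap B_i$. The symmetric argument applied to $t_ibt_i^{-1} = \varphi_i^{-1}(b)$ for $b \in K\cap B_i$ yields the reverse inclusion, so $K$ is good.

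For the forward direction, I would assume $K$ is good and use $H_K$. The goodness of $K$ is precisely what makes the maps $\overline{\varphi}_i: A_i/(K\cap A_i) \to B_i/(K\cap B_i)$ well-defined isomorphisms, so $H_K$ exists and Definition \ref{good pieces} produces the surjection $\phi_K: H \twoheadrightarrow H_K$ sending $g \mapsto gK$ on $G$ and $t_i \mapsto \overline{t}_i$. The restriction of $\phi_K$ to $G$ factors as $G \twoheadrightarrow G/K \hookrightarrow H_K$, where the second map is injective by Corollary \ref{HNN emb2}; hence $\ker(\phi_K)\cap G = K$. Since $K \subseteq \ker(\phi_K)$ and the kernel is normal in $H$, we obtain $\llangle K \rrangle^H \subseteq \ker(\phi_K)$, which gives $\llangle K \rrangle^H \cap G \subseteq K$. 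The inclusion $K \subseteq \llangle K \rrangle^H \cap G$ is trivial.

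I do not anticipate serious obstacles here: the reverse direction is essentially a one-line computation once the HNN relations are invoked, and the forward direction is mostly bookkeeping, relying on the fact that Definition \ref{good pieces} and Corollary \ref{HNN emb2} already do the substantive work of producing the quotient extension $H_K$ and guaranteeing that $G/K$ embeds into it. The only point requiring mild care is to note that goodness of $K$ is exactly the condition needed to define $\overline{\varphi}_i$, so the construction of $\phi_K$ is available precisely in the hypothesis of the forward direction.
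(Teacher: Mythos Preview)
Your proposal is correct and follows essentially the same approach as the paper: both directions are argued identically, with the reverse implication via the HNN relation $\varphi_i(k)=t_i^{-1}kt_i$ and the forward implication via the quotient map $\phi_K$ of Definition~\ref{good pieces}. The only cosmetic difference is that you cite Corollary~\ref{HNN emb2} for the injectivity of $G/K\hookrightarrow H_K$, whereas the paper invokes Britton's lemma (Theorem~\ref{Brit lem 2}) directly; these are equivalent.
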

 
\begin{proof}\mbox{}
\\$\Leftarrow$: Assume that $\llangle K \rrangle^{H} \cap G = K$ in $H$. Take  $1\leq i \leq n$ and suppose $x \in A_{i} \cap K$. We know that $\varphi_{i}(x)\in B_{i}$: we need to verify that  $\varphi_{i}(x)\in K$. However,  it is immediate that $\varphi_{i}(x)=t_{i}^{-1}xt_{i}\in \llangle K \rrangle^{H}$, and thus that $\varphi_{i}(x)\in \llangle K \rrangle^{H}\cap B_{i}=\llangle K \rrangle^{H}\cap G \cap B_{i} = K\cap B_{i};$ it follows that $\varphi_{i}(A_{i}\cap K) \subseteq B_{i} \cap K.$ The inclusion $\varphi_{i}(A_{i}\cap K) \supseteq B_{i} \cap K$ can be proved in a similar fashion. 
\\$\Rightarrow$: Suppose $K$ is a good subgroup of  $G$  with respect to the HNN extension $H$, and take $\phi_{K}$ as in Definition \ref{good pieces}.
Then it is clear that $K \leq \llangle K \rrangle^{H}\cap G \leq \kernel(\phi_{K})\cap G \leq K$; the last inequality here is a consequence of Theorem \ref{Brit lem 2}.
\end{proof}

\begin{lem}\label{good kernel}
Let $H$, $K$ and $\phi_{K}$ be as in Definition $\ref{good pieces}$. Then $\kernel(\phi_{K}) = \llangle K \rrangle ^{H}$.
\end{lem}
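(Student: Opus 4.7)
The plan is to prove the two inclusions separately. The forward inclusion $\llangle K \rrangle^H \leq \kernel(\phi_K)$ is immediate: $\phi_K$ sends every element of $K$ to the identity coset $eK$ in $G/K \leq H_K$, and $\kernel(\phi_K)$ is normal in $H$, so it contains $\llangle K \rrangle^H$. As a consequence $\phi_K$ descends to a surjection $\overline{\phi_K}: H/\llangle K \rrangle^H \twoheadrightarrow H_K$, and proving $\kernel(\phi_K) \leq \llangle K \rrangle^H$ is equivalent to proving $\overline{\phi_K}$ is injective. I intend to do this by constructing a two-sided inverse $\psi: H_K \to H/\llangle K \rrangle^H$.

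To construct $\psi$, I will invoke the universal property of the HNN extension $H_K=(G/K)*_{\overline{\varphi}_{1}, \ldots, \overline{\varphi}_{n}}$. Since $K \leq \llangle K \rrangle^H$, the quotient map $G \twoheadrightarrow H/\llangle K \rrangle^H$ factors through $G/K$, giving a homomorphism $f: G/K \to H/\llangle K \rrangle^H$. I then take the candidate stable letters $s_i := t_i \llangle K \rrangle^H$ for $1 \leq i \leq n$, and check that the HNN defining relations hold in $H/\llangle K \rrangle^H$: for each $aK \in A_i/(K \cap A_i)$,
\[
s_i^{-1} f(aK) s_i = t_i^{-1} a t_i \llangle K \rrangle^H = \varphi_i(a) \llangle K \rrangle^H = f(\overline{\varphi}_i(aK)),
\]
where the middle equality uses the original HNN relations in $H$. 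The universal property of $H_K$ then yields a unique homomorphism $\psi: H_K \to H/\llangle K \rrangle^H$ extending $f$ and sending $\overline{t}_i \mapsto s_i$.

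The final step is to verify that $\psi \circ \overline{\phi_K}$ and $\overline{\phi_K} \circ \psi$ act as the identity on the respective generating sets $\{g\llangle K \rrangle^H\} \cup \{t_i\llangle K \rrangle^H\}$ and $\{gK\} \cup \{\overline{t}_i\}$; this is immediate from the definitions of both maps. The work in this argument is essentially conceptual rather than computational: the content is recognising that, having already arranged the $\overline{\varphi}_i$ to be well-defined isomorphisms in Definition \ref{good pieces}, the statement becomes a direct application of the universal property. No appeal to Britton's lemma is needed here, although one could alternatively give a more hands-on proof by tracking pinches, at the cost of more bookkeeping.
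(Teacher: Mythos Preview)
Your proof is correct, and it takes a genuinely different route from the paper's. The paper proves the nontrivial inclusion $\kernel(\phi_K)\subseteq\llangle K\rrangle^H$ directly: given $x\in\kernel(\phi_K)$, it inducts on the number of stable letters appearing in a normal form for $x$ in $H$, applies Britton's lemma to $\phi_K(x)$ in $H_K$ to locate a $\overline{t}_i$-pinch, lifts that pinch back to $x$ (using that the fibre over $A_i/(K\cap A_i)$ is $A_iK$), and then rearranges to peel off a factor in $\llangle K\rrangle^H$ and reduce the stable-letter count. Your argument bypasses all of this: you recognise that the quotient $H/\llangle K\rrangle^H$ already satisfies the defining relations of $H_K$, so the universal property of the HNN extension $H_K$ hands you the inverse map $\psi$ for free, and checking $\psi$ and $\overline{\phi_K}$ are mutual inverses on generators is a formality.

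What each approach buys: your universal-property argument is shorter, avoids any induction or normal-form bookkeeping, and makes clear that the result is a formal consequence of the presentation of $H_K$ as a quotient; in particular it does not even use that the $\overline{\varphi}_i$ are injective (only that they are well-defined, which is where goodness of $K$ enters). The paper's Britton-lemma argument is more explicit about how elements of the kernel are built from conjugates of $K$, and fits the hands-on style used elsewhere in the paper; it also rehearses exactly the kind of pinch-tracking manipulations that recur in later arguments. Amusingly, your closing remark that ``one could alternatively give a more hands-on proof by tracking pinches'' is precisely what the paper does.
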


\begin{proof}

The containment $ \llangle K \rrangle ^{H} \subseteq \kernel(\phi_{K}) $ is immediate.

Let $x \in \kernel(\phi_{K})$. We induct on the total number of occurrences of $t_{i}$ or $t_{i}^{-1}$ over all the $i$'s, where $1\leq i\leq n$,  in the normal form of $x$ in $H$: if $x$ has none, then $x\in K.$ 
 
Assume that for some $i$, either $t_{i}$ or $t_{i}^{-1}$ appears at least once in $x$.  By Britton's lemma $\phi_{K}(x)$ has a subword of the form $\overline{t}_{i}^{-1}a\overline{t}_{i}$ where $ a \in A_{i}/(A_{i} \cap K)$ or $\overline{t}_{i}b\overline{t}_{i}^{-1}$ where $b \in B_{i}/(B\cap K)$. Thus $x$ has a subword of the form $t_{i}^{-1}a't_{i}$ where $ a' \in A_{i}K$ or $t_{i}b't^{-1}$  where $b \in B_{i}K$. Without loss of generality, we assume the former.  
 
This subword $t_{i}^{-1}a't$ is of the form $ t_{i}^{-1}akt_{i}$, where $a \in A_{i}$ and  $k \in K.$ 
But $t_{i}^{-1}at_{i}=b \in B$, for some $b\in B$.  We can therefore write $x$ as $\lambda_{1}t_{i}^{-1}akt_{i}\lambda_{2}=\lambda_{1}bt_{i}^{-1}kt_{i}\lambda_{2}$.  Observe that $t_{i}^{-1}kt_{i}\in \llangle K \rrangle^{H}$, and thus that $t_{i}^{-1}kt_{i}\lambda_{2} = \lambda_{2}y$ where  $y \in \llangle K \rrangle^{H}$. We can therefore rewrite $x=\lambda_{1}b\lambda_{2}y$; from this we see that $\lambda_{1}b\lambda_{2} \in \kernel(\phi_{K})$. By induction, we have that $\lambda_{1}b\lambda_{2} \in \llangle K \rrangle^{H}$. This tells us that $x\in \llangle K \rrangle^{H}$.
\end{proof}

\begin{cor}\label{subset lem cor}
 Let $H$, $K$ and $H_{K}$ be as in Definition $\ref{good pieces}$. Then $\phi_{K}$ induces an isomorphism 
 \[
 \overline{\phi}_{K}\ :\  H/\llangle K \rrangle^{H} \overset{\cong}{\longrightarrow} H_{K}.
 \]
\end{cor}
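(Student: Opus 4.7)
The plan is a direct application of the first isomorphism theorem, since the real work has already been done in Lemma \ref{good kernel}. First, I would observe that the homomorphism $\phi_{K} : H \twoheadrightarrow H_{K}$ from Definition \ref{good pieces} is surjective by construction: the generating set $\{gK : g \in G\} \cup \{\overline{t}_{1}, \ldots, \overline{t}_{n}\}$ of $H_{K}$ is precisely the image under $\phi_{K}$ of the generating set $G \cup \{t_{1}, \ldots, t_{n}\}$ of $H$.

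Next, Lemma \ref{good kernel} gives $\kernel(\phi_{K}) = \llangle K \rrangle^{H}$. Applying the first isomorphism theorem then yields the induced isomorphism $\overline{\phi}_{K} : H/\llangle K \rrangle^{H} \xrightarrow{\cong} H_{K}$, as desired. There is no real obstacle: the corollary is essentially a repackaging of Lemma \ref{good kernel} into the quotient form most convenient for later use, identifying the quotient of $H$ by the normal closure of a good normal subgroup $K$ with the HNN extension of $G/K$ built from the induced isomorphisms $\overline{\varphi}_{i}$.
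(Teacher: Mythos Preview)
Your proposal is correct and matches the paper's intent: the corollary is stated without proof there, as it follows immediately from Lemma \ref{good kernel} via the first isomorphism theorem, exactly as you outline.
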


\section{The Higman embedding construction}\label{constructions}

The Higman Embedding Theorem states that a finitely generated, recursively presented group can be embedded in a finitely presented group. In this section we provide an overview of a proof of this result, introducing notation and constructions that will be used later in this note.

\subsection{Simulating a modular machine by a finitely presented group}\label{fg group}\mbox{}

We begin by describing how a modular machine can be simulated by a finitely presented group. This construction is then used in a proof of the Higman Embedding Theorem. 

The idea is to follow the construction in \cite[pp.266--268]{Cohen}. This was derived from \cite{Cohen2}, where a detailed exposition of modular machines can be found. We felt, however, that the exposition in \cite{Simpson} was slightly clearer, so we replicate here the argument presented there (the differences are only slight).

\begin{enumerate}
 \item Define the group $K:=\langle x,y,t \ | \ [x,y]=e \rangle$.
 \item For all $(r,s)\in \mathbb{Z}^{2}$, define the word $t(r,s):= y^{-s}x^{-r}tx^{r}y^{s}\in K.$
 \item \label{s:1} Let $T := \langle \{t(r,s)\}_{(r,s)\in \mathbb{Z}^{2}} \rangle \leq K$.
  \item Observe that $T$ is free with basis $\{t(r,s) \}_{(r,s)\in \mathbb{Z}^{2}}$.
 \item Observe that $T=\llangle t \rrangle^{K}$.
 
 \item For $M>a\geq 0$, $N>b\geq 0$, define
 \[  K_{a,b}^{M,N}:=\langle  t(a,b),x^{M}, y^{N} \rangle \leq K,\]
 \[
  T_{a,b}^{M,N}:=\langle \{t(\alpha, \beta) \ | \ \alpha \equiv a \mod M,\ \beta \equiv b \mod N  \} \rangle  \leq T \leq K.
 \]
 
 \item Let $(i,j)\in \mathbb{Z}^{2},$ and $m,n\in \mathbb{Z}.$ Observe that $T\cap \langle t(i,j), x^{m}, y^{n}\rangle$ is free with basis $\{t(r,s) \ | \ r \equiv i \mod m,\ s \equiv j \mod n\}$. In particular, \[T \cap K_{a,b}^{M,N} = T_{a,b}^{M,N}.\]
 
 \item\label{map step} Observe that the correspondence $t\mapsto t(a,b)$, $x\mapsto x^{M}$, $y\mapsto y^{N}$ induces an isomorphism \[K\to K_{a,b}^{M,N}.\] This isomorphism sends $t(u,v)$ to $ t(uM+a, vN+b)$ and thus induces an isomorphism \[T\to T_{a,b}^{M,N}.\]

 \item Let $\mathcal{M}=\{(a_{i},b_{i}, c_{i}, R )\ | \ i \in I \}\cup \{(a_{j},b_{j}, c_{j}, L )\ | \ j \in J \}$ be a modular machine with modulus $m$.
 \item The maps in step (\ref{map step}) induce, for each $i\in I$ and $j\in J$, isomorphisms \[\phi_{i}: K_{a_{i}, b_{i}}^{m,m} \to K_{c_{i}, 0}^{m^{2},1},\]  \[\varphi_{j}: K_{a_{j}, b_{j}}^{m,m} \to K_{0, c_{j}}^{1,m^{2}}.\] 
 \item\label{define KM} Form the HNN extension \[K_{\mathcal{M}}:= K*_{\{\phi_{i}\}_{i\in I}, \{\varphi_{j}\}_{j\in J}},\] with stable letters $\{r_{i}\}_{i\in I}$ and $\{l_{j}\}_{j\in J}$.
Note that $K_{\mathcal{M}}$ is finitely presented.
 \item Define the subgroup $T':=\langle T, \{r_{i}\}_{i \in I}, \{l_{j}\}_{j \in J} \rangle \leq  K_{\mathcal{M}}$, where $T$ is as in step (\ref{s:1}).
 \item Define the set $H_{0}(\mathcal{M}):= \{ (\alpha, \beta) \ | \ (\alpha, \beta) \underset{\mathcal{M}}{\overset{*}{\longrightarrow}}  (0,0) \}.$
 
 \item Define $T_{\mathcal{M}}:=\langle \{t(\alpha, \beta) \ | \ (\alpha, \beta) \in H_{0}(\mathcal{M})\} \rangle\leq K.$
 
 \item Define $T_{\mathcal{M}}':=\langle T_{\mathcal{M}}, \{r_{i}\}_{i \in I}, \{l_{j}\}_{j \in J} \rangle \leq K_{\mathcal{M}}.$

 \item Observe that $T_{\mathcal{M}}'=\langle t,\{ r_{i}\}_{i \in I}, \{l_{j}\}_{j \in J} \rangle.$ 
 
 \item Observe that $t(\alpha, \beta) \in T_{\mathcal{M}}'$ iff $(\alpha, \beta) \in H_{0}(\mathcal{M})$.

 \item With the identity map $\theta: T_{\mathcal{M}}' \to T_{\mathcal{M}}'$, form the HNN extension \[G_{\mathcal{M}}:= K_{\mathcal{M}}*_{\theta}\] with stable letter $q$. 

 \item\label{key point} Observe that $q^{-1}t(\alpha, \beta)q=t(\alpha, \beta)$ in $G_{\mathcal{M}}$ iff $(\alpha, \beta) \in H_{0}(\mathcal{M})$.
  
\end{enumerate}

Taking $\mathcal{M'}$ with nonrecursive halting set $H_{0}(\mathcal{M'})$ gives a finitely presented group $G_{\mathcal{M'}}$ with undecidable word problem. 

For our purposes, a useful consequence of the above construction is that we can simulate any modular machine by a finitely generated group: see step (\ref{key point}) of Construction \ref{fg group}.

\subsection{The Higman Embedding Theorem}\label{hig construct}\mbox{}

We now give an overview of the construction used in a particular proof of the Higman Embedding Theorem, taken directly from \cite[pp.279--281]{Cohen}. We note that this proof originally comes from \cite{Cohen3}.

\begin{enumerate}
 \item Let $C=\langle c_{1}, \ldots, c_{n} \ | \ S\  \rangle$ be a finitely generated recursively presented group, where $S$ corresponds to the set $H_{0}(\mathcal{M})$ of a modular machine $\mathcal{M}$; see step (\ref{S set}). Denote the modulus of $\mathcal{M}$ by $m$.  We assume that  $S$ covers \emph{all} the trivial words in the group. 
 \item Re-write every word in $C$ as a word in the free monoid on $\{c_{1}, \ldots, c_{2n}\}$ with $c_{i}^{-1}$ replaced by $c_{n+i}$.
 \item To each word $w=c_{i_{k}}c_{i_{k-1}}\cdots c_{i_{0}}$ associate an $m$-ary representation $\alpha=\Sigma_{j=0}^{k}i_{j}m^{j}$. 
 \item Define $I:=\{\alpha \in \mathbb{N} \ | \ \alpha\ \textnormal{represents a word} \}$. That is, $\alpha=\Sigma_{j=0}^{k}\beta_{j}m^{j}$ where  $1 \leq \beta_{j} \leq 2n.$
 \item For $\alpha \in I$, define $w_{\alpha}(c)$  to be the word formed from $\alpha$.
 \item For $\alpha \in I$, write  $w_{\alpha}(b)$, $w_{\alpha}(bc)$ for the words obtained from $w_{\alpha}(c)$ by replacing $c_{i}$ with $b_{i}$ and $b_{i}c_{i}$ respectively (where $\{b_{i}\}_{i=1}^{2n}$ are a new set of symbols).
 \item\label{S set} Observe that, for all $\alpha \in I$, we have that $w_{\alpha}(c) \in S$ iff $(\alpha, 0) \in H_{0}(M)$.
 \item Recall the group $K_{\mathcal{M}}$ from step (\ref{define KM}) of \ref{fg group}. Define $U:=\{t, \{r_{i}\}_{i\in I}, \{l_{j}\}_{ j\in J} \}$; $U$ is a subset of $K_{\mathcal{M}}.$ 
 \item Define $t_{\alpha}:=t(\alpha, 0)\in K_{\mathcal{M}}.$
 \item Form the free product
 \[
 H_{1}:=K_{\mathcal{M}}* (C \times \langle b_{1}, \ldots, b_{n} \vert - \rangle)*\langle d  \vert -\rangle,
 \]
 and set $b_{n+i}:=b_{i}^{-1}$ for $1 \leq i \leq n$.
 \item Observe that the sets $\{t_{\alpha}\ |\  \alpha \in I\}$ and $\{t_{\alpha}w_{\alpha}(b)d\ |\  \alpha \in I\}$ each form a free basis for the subgroups they respectively generate in $H_{1}$. The correspondence $t_{\alpha} \mapsto t_{\alpha}w_{\alpha}(b)d $ extends to an isomorphism $\psi$ between these subgroups. 
 \item Form the HNN extension
 \[
  H_{2}:= H_{1}*_{\psi}
 \] with stable letter $p$. 
 \item Define the subgroup
 \[
  A:=\langle t,x,d,b_{1}, \ldots, b_{n}, p \rangle \leq H_{2}.
 \]
 \item For $1 \leq i \leq 2n$, define the subgroup
 \[
  A_{i}:=\langle t_{i},x^{m},b_{i}d,b_{1}, \ldots, b_{n}, p \rangle \leq H_{2}.
 \]
 \item \label{s:5} Observe that for all $i$, $A$ is isomorphic to $A_{i}$ via the map $\psi_{i}$ induced by the correspondence sending $t \mapsto t_{i}$, $x \mapsto x^{m}$, $d \mapsto b_{i}d$, $b_{j} \mapsto b_{j}$ for all $\ 1 \leq j \leq n$, and $p \mapsto p$.
 
 \item Observe that $\langle t,x,d,b_{1},\ldots,b_{n}\rangle$ and $\langle t_{i},x^{m},b_{i}d,b_{1}, \ldots, b_{n} \rangle$ are both good in $H_{1}$ with respect to the HNN extension $H_{2}$. Therefore $A$, and the $A_{i}$ for $1 \leq i \leq 2n$, are all HNN extensions.
 \item Define the subgroup
 \[
  A_{+}:=\langle U,d,b_{1}, \ldots, b_{n}, p \rangle \leq H_{2}. 
 \]
\item Define the subgroup
 \[
  A_{-}:=\langle U,d,b_{1}c_{1},\ldots, b_{n}c_{n}, p \rangle \leq H_{2}. 
 \]

  \item Observe that $\langle U,d,b_{1}, \ldots, b_{n} \rangle $ is good in $H_{1}$ with respect to the HNN extension $H_{2}$. Therefore $A_{+}$ is an HNN extension.
 
 \item\label{iso +-} Observe that $A_{+}$ is isomorphic to $A_{-}$ via the map $\psi_{+}: A_{+}\to A_{-}$  induced by the correspondence sending $u\mapsto u$ for all $u \in U$, $d \mapsto d$, $b_{j} \mapsto b_{j}c_{j}$ for all  $1 \leq j \leq n$, and $p \mapsto p$.

 \item With the isomorphisms defined above, define the HNN extension
 \[ H_{3}:= H_{2}*_{\psi_{1},\ldots,\psi_{2n},\psi_{+}},\]
 with stable letters $a_{1},\ldots,a_{2n}$ and $k$.

 \item Observe that $H_{3}$ is finitely presented, and $C \hookrightarrow H_{3}$.

\end{enumerate}

\section{Properties of the embedding construction}

In this section, the groups $C, H_{1}, H_{2}$ and $H_{3}$ will be as in Section \ref{hig construct}.

\begin{lem}\label{subset lem}
 Let $X$ be a subset of $ C$. Then
 
 \begin{enumerate}
\item $\llangle X \rrangle^{H_{1}}$ is good in $H_{1}$  with respect to the HNN extension $H_{2}.$
\item $\llangle X \rrangle^{H_{2}}$ is good in $H_{2}$  with respect to the HNN extension $H_{3}.$
 \end{enumerate}

\end{lem}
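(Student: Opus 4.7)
Both statements have the form: for the normal subgroup $K = \llangle X\rrangle^G \nsub G$, show $K$ is good in $G$ with respect to an HNN extension $G^*$ of $G$. Via Lemma \ref{good normal lem}, this reduces to checking $\llangle X\rrangle^{G^*} \cap G = \llangle X\rrangle^G$, i.e.\ that passing to the normal closure inside $G^*$ introduces no new identifications on $G$. My plan for both parts is to construct the quotient $\bar G^* := G^*/\llangle X\rrangle^{G^*}$ explicitly as an HNN extension of $\bar G := G/\llangle X\rrangle^G$, with HNN data inherited from that of $G^*$; once a quotient map $\phi\colon G^* \twoheadrightarrow \bar G^*$ is in place, Britton's Lemma (Theorem \ref{Brit lem 2}) applied inside $\bar G^*$ forces any element of $G \cap \ker(\phi)$ to already lie in $\llangle X\rrangle^G$, closing the argument.

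For part (1), since $X \subseteq C$ sits in a single free factor of $H_1 = K_{\mathcal M} * (C \times \langle b_1,\ldots,b_n\mid-\rangle) * \langle d\mid-\rangle$, the quotient is transparently $\bar H_1 = K_{\mathcal M} * ((C/\llangle X\rrangle^C) \times \langle b_1,\ldots,b_n\mid-\rangle) * \langle d\mid-\rangle$. The associated subgroups of the HNN extension $H_2 = H_1*_\psi$ are freely generated by $\{t_\alpha\}$ and $\{t_\alpha w_\alpha(b) d\}$, and none of these generating words involves any $c_i$; a standard normal-form argument in the free product $\bar H_1$ shows that they still freely generate the expected subgroups there. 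Consequently $\bar\psi$ is well-defined and $\bar H_2 := \bar H_1 *_{\bar\psi}$ fits into a quotient $H_2 \twoheadrightarrow \bar H_2$ whose kernel coincides with $\llangle X\rrangle^{H_2}$, as desired.

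For part (2) the same procedure is run one level up, with $G = H_2$, $G^* = H_3$, and $\bar G = \bar H_2$ produced by part (1). The isomorphisms $\psi_i\colon A \to A_i$ descend immediately, since none of the defining generating sets involves any $c_i$. The real obstacle is the descent of $\psi_+\colon A_+ \to A_-$: the generators $b_ic_i$ of $A_-$ explicitly involve $C$, so one must verify that the putative descended map $\bar\psi_+\colon \bar A_+ \to \bar A_-$ given by $b_j \mapsto b_j\bar c_j$ is in fact an isomorphism inside $\bar H_2$. The plan is to reproduce the argument of step \ref{iso +-} of Construction \ref{hig construct} with $\bar H_1$ in place of $H_1$, using the fact (by the same reasoning as in part (1)) that the subgroups $\langle U,d,b_1,\ldots,b_n\rangle$ and $\langle U,d,b_1c_1,\ldots,b_nc_n\rangle$ remain good in $\bar H_1$ with respect to $\bar H_2$. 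Once $\bar H_3$ is assembled and $H_3 \twoheadrightarrow \bar H_3$ is shown to have kernel $\llangle X\rrangle^{H_3}$, a final application of Britton's Lemma completes part (2).
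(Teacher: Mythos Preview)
Your approach is correct, but it is genuinely different from the paper's and noticeably more laborious.

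The paper never passes to the quotient $\bar H_i$ at all. Instead it defines a single retraction $\lambda\colon H_1\to H_1$ that is the identity on $K_{\mathcal M}$, on $\langle b_1,\ldots,b_n\rangle$, and on $\langle d\rangle$, and is the trivial map on $C$; then it extends $\lambda$ to $\bar\lambda\colon H_2\to H_2$ by $p\mapsto p$. Since $\llangle X\rrangle^{H_i}\leq\ker(\lambda)$ for every $X\subseteq C$, and since the associated subgroups $\langle\{t_\alpha\}\rangle$, $\langle\{t_\alpha w_\alpha(b)d\}\rangle$, $A$, $A_i$, $A_+$ all lie in the fixed set of $\lambda$ (resp.\ $\bar\lambda$), one gets immediately that $\llangle X\rrangle^{H_i}$ meets each of these subgroups \emph{trivially}. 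For $A_-$ the paper observes that $\bar\lambda|_{A_-}$ is injective because it coincides with $\iota_+\circ\psi_+^{-1}$; hence $\llangle X\rrangle^{H_2}\cap A_-=\{e\}$ as well. Goodness is then automatic, in the strong form that all intersections vanish.

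Your route---rebuild the whole Higman tower with $C$ replaced by $C/\llangle X\rrangle^C$, then invoke Lemma \ref{good normal lem} and Britton's lemma---works, and has the mild conceptual payoff that it manufactures the quotient HNN extensions that later lemmas use anyway. Its cost is exactly the step you flag: for $\psi_+$ you must redo the verification of step (\ref{iso +-}) of Construction \ref{hig construct} inside $\bar H_2$, which means re-establishing that $\langle U,d,b_1\bar c_1,\ldots,b_n\bar c_n\rangle$ is good in $\bar H_1$ and that $\bar A_+\cong\bar A_-$. The paper's retraction sidesteps this entirely: one map, independent of $X$, handles all associated subgroups at once and the $A_-$ case falls out of a one-line diagram chase rather than a replay of the construction.
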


\begin{proof}
 We claim that the following is true:
 \begin{align*}
  \llangle X \rrangle^{H_{1}} &\cap \langle \{t_{\alpha} \ | \ \alpha \in I\} \rangle=\{e\},
  \\ \llangle X \rrangle^{H_{1}} &\cap \langle \{t_{\alpha}w_{\alpha}(b)d \ | \ \alpha \in I\} \rangle =\{e\}.
 \end{align*}
To see this, consider the map $ \lambda :  H_{1} \to H_{1}$ induced by the identity maps on $K_{\mathcal{M}}$, $\langle b_{1}, \ldots, b_{n} \vert - \rangle$, and  $\langle d \vert - \rangle$, and the trivial map on $C$.

The map $\lambda$,  restricted to ${K_{M}* (\{e\} \times \langle b_{1}, \ldots, b_{n} \vert -  \rangle)*\langle d  | -\rangle}$, is injective, and thus injective on both  $\langle \{t_{\alpha} \ | \ \alpha \in I\} \rangle$ and  $\langle \{t_{\alpha}w_{\alpha}(b)d \ | \ \alpha \in I\} \rangle $. However, $\llangle X \rrangle^{H_{1}}$ is contained in $ \kernel(\lambda)$.  

This proves the first part of the lemma; we now move to the second. 

Take the map $\lambda$ defined above. It is clear that $\lambda$ extends to a map $\overline{\lambda}: H_{2} \to H_{2}$, sending $p \mapsto p$. 
Again, $\llangle X \rrangle^{H_{2}} \leq \kernel(\overline{\lambda})$. As before, we see that the restriction of $\overline{\lambda}$ to $ {\langle K_{M}* (\{e\} \times \langle b_{1}, \ldots, b_{n} | - \rangle)*\langle d  | -\rangle , p \rangle}$ is injective. It follows that $\llangle X \rrangle^{H_{2}}\cap A = \llangle X \rrangle^{H_{2}}\cap A_{i}=\{e\}$ for all $1 \leq i \leq 2n$.
\\Finally, consider the inclusions
\begin{align*}
 &\iota_{-}: A_{-}\to H_{2}
 \\ &\iota_{+}: A_{+}\to H_{2}
\end{align*}
Step (\ref{iso +-}) of \ref{hig construct} tells us that the restriction of $\lambda$ to $A_{-}$ is injective with image $A_{+}$, and thus induces an isomorphism $\lambda': A_{-} \to A_{+}$; $\lambda'$ is inverse to the map $\psi_{+}$ defined in (\ref{s:5}) of \ref{hig construct}. We see that $\overline{\lambda}\circ \iota_{+} \circ \lambda'=\overline{\lambda}\circ\iota_{-}: A_{-}\to H_{2}$:

\begin{displaymath}
\xymatrix{
A_{-} \ar[d]_{\lambda'} \ar[rd]^{\overline{\lambda}\circ i_{-}} &  \\
A_{+} \ar[r]_{\overline{\lambda}\circ i_{+}} &  H_{2}}
 \end{displaymath}
It is clear that $\overline{\lambda}\circ\iota_{+}$ is injective, and thus that $\overline{\lambda}\circ\iota_{-}$ is as well. Since $\llangle X \rrangle^{H_{2}}$ is contained in $\kernel(\overline{\lambda})$, we see that $\llangle X \rrangle^{H_{2}}\cap A_{-} = \llangle X \rrangle^{H_{2}}\cap A_{+}=\{e\}$. This proves the last part of the lemma.
\end{proof}

Before we proceed, we need the following observation. It is proved in the same way that \cite[Corollary 2.9]{ChiVya} is, by using the torsion theorem for HNN extensions.

\begin{lem}\label{HNN tor lem}
 Let $G$ be a group, and $\varphi: H \to K$ an isomorphism between subgroups $H, K \leq G$. Let $G*_{\varphi}$ be the associated HNN extension. Then 
 \[
  \tor_{1}(G*_{\varphi})=\llangle \tor_{1}(G) \rrangle^{G*_{\varphi}} = \llangle \tor(G) \rrangle^{G*_{\varphi}}.
  \]
\end{lem}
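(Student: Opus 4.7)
The plan is to invoke the torsion theorem for HNN extensions, which states that every torsion element of $G*_{\varphi}$ is conjugate in $G*_{\varphi}$ to a torsion element of $G$. This is the direct analogue of the corresponding fact for amalgamated free products that powers the proof of \cite[Corollary 2.9]{ChiVya}, and indeed the hint tells us to mimic that argument.

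First I would establish the easy equality $\llangle \tor_{1}(G) \rrangle^{G*_{\varphi}} = \llangle \tor(G) \rrangle^{G*_{\varphi}}$. By Definition \ref{defoftorsion} we have $\tor_{1}(G) = \llangle \tor(G) \rrangle^{G}$, and taking the normal closure in $G*_{\varphi}$ of a set already normally closed in $G$ gives the same result as taking the normal closure in $G*_{\varphi}$ of the original generating set. So this equality is immediate.

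Next I would prove $\tor_{1}(G*_{\varphi}) = \llangle \tor(G) \rrangle^{G*_{\varphi}}$. Unwinding the definition, $\tor_{1}(G*_{\varphi}) = \llangle \tor(G*_{\varphi}) \rrangle^{G*_{\varphi}}$. For the inclusion $\supseteq$, note that by Corollary \ref{HNN emb2} the group $G$ embeds in $G*_{\varphi}$, so $\tor(G) \subseteq \tor(G*_{\varphi})$ and hence the normal closures satisfy the desired containment. For the reverse inclusion, let $w \in \tor(G*_{\varphi})$ be any torsion element. By the torsion theorem for HNN extensions, there exists $x \in G*_{\varphi}$ and $g \in \tor(G)$ with $w = xgx^{-1}$. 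In particular $w \in \llangle \tor(G) \rrangle^{G*_{\varphi}}$. Since $\tor(G*_{\varphi})$ is therefore contained in $\llangle \tor(G) \rrangle^{G*_{\varphi}}$, the same holds for its normal closure in $G*_{\varphi}$, yielding the remaining inclusion.

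The only step that is not entirely formal is the appeal to the torsion theorem for HNN extensions, and this is exactly the classical statement used in \cite[Corollary 2.9]{ChiVya}; I would cite that theorem (for instance from the standard reference used in the paper) rather than reprove it. No new combinatorial or small-cancellation input is required here.
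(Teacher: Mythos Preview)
Your proposal is correct and follows precisely the route the paper indicates: the paper does not actually write out a proof but simply remarks that the lemma ``is proved in the same way that \cite[Corollary 2.9]{ChiVya} is, by using the torsion theorem for HNN extensions,'' which is exactly the argument you have supplied in detail. There is nothing to add.
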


\begin{lem}\label{C results}
For all $m\geq 0$, the following hold:
\begin{enumerate}
\item $\tor_{m}(H_{1})=\llangle \tor_{m}(C) \rrangle^{H_{1}}$.
\item $\tor_{m}(H_{1})\cap C=\tor_{m}(C)$.
\end{enumerate}

\end{lem}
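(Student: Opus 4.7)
The plan is to induct on $m$, using the free-product decomposition
\[
H_{1}=K_{\mathcal{M}} * \bigl(C\times F(b_{1},\ldots,b_{n})\bigr) * \langle d\rangle
\]
to push torsion information up from $C$ to $H_{1}$, and a retraction $H_{1}\twoheadrightarrow C$ to pull it back. The case $m=0$ is trivial. At the outset I record that $K_{\mathcal{M}}$ and $\langle d\rangle$ are torsion-free: $K\cong \mathbb{Z}^{2} * \mathbb{Z}$ is torsion-free, and the torsion theorem for HNN extensions (Lemma \ref{HNN tor lem} applied iteratively, plus the observation that the base remains torsion-free at every stage of forming $K_{\mathcal{M}}$) transports torsion-freeness to $K_{\mathcal{M}}$.

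For the inductive step of (1), suppose $\tor_{m}(H_{1})=\llangle \tor_{m}(C)\rrangle^{H_{1}}$. Since $\tor_{m}(C)$ sits inside the single free factor $C\times F(b_{1},\ldots,b_{n})$ and is normal there (as $\tor_{m}(C)\nsub C$), the universal property of free products gives a canonical isomorphism
\[
H_{1}/\tor_{m}(H_{1}) \ \cong\ K_{\mathcal{M}} * \bigl((C/\tor_{m}(C))\times F(b_{1},\ldots,b_{n})\bigr) * \langle d\rangle.
\]
The classical theorem that every finite-order element of a free product is conjugate into a factor, combined with torsion-freeness of $K_{\mathcal{M}}$, $\langle d\rangle$, and $F(b_{1},\ldots,b_{n})$, identifies $\tor_{1}$ of the right-hand side with the normal closure (in the quotient) of $\tor(C/\tor_{m}(C)) = \tor_{m+1}(C)/\tor_{m}(C)$. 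Lifting through the lattice isomorphism theorem, and using $\tor_{m}(C)\subseteq\tor_{m+1}(C)$ to absorb the old level, yields $\tor_{m+1}(H_{1}) = \llangle \tor_{m+1}(C)\rrangle^{H_{1}}$.

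Part (2) then follows from the canonical retraction $\pi:H_{1}\twoheadrightarrow C$ that kills $K_{\mathcal{M}}$, each $b_{i}$, and $d$, and fixes $C$ pointwise. Surjectivity of $\pi$ gives $\pi\bigl(\llangle \tor_{m}(C)\rrangle^{H_{1}}\bigr)=\llangle \tor_{m}(C)\rrangle^{C}=\tor_{m}(C)$ (the last equality because $\tor_{m}(C)$ is already normal in $C$), so any $c\in C\cap\tor_{m}(H_{1})$ satisfies $c=\pi(c)\in\tor_{m}(C)$, while the reverse inclusion is immediate. The only genuine inputs are the torsion structures of free products and HNN extensions; I anticipate no real obstacle beyond checking that these standard facts apply cleanly to the specific factor $C\times F(b_{1},\ldots,b_{n})$, where torsion is detected by the projection onto $C$ because $F(b_{1},\ldots,b_{n})$ is torsion-free.
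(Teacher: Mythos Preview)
Your argument is correct and follows the same strategy as the paper. The paper compresses your induction for part~(1) by citing \cite[Proposition 2.10]{ChiVya}, which says directly that $\tor_{m}$ of a free product is the normal closure of the union of the $\tor_{m}$'s of the factors; your inductive step is exactly what proves that proposition in this special case. For part~(2) the paper uses the same retraction $H_{1}\to C$, but instead of computing $\pi(\llangle \tor_{m}(C)\rrangle^{H_{1}})$ via part~(1) it invokes the general fact that group homomorphisms carry $\tor_{m}$ into $\tor_{m}$---a cosmetic difference only. One small slip: you wrote $\tor(C/\tor_{m}(C)) = \tor_{m+1}(C)/\tor_{m}(C)$, but the right-hand side is $\tor_{1}(C/\tor_{m}(C))$, i.e.\ the normal closure of the left-hand side; this is harmless since you immediately take normal closures in the ambient free product anyway.
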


\begin{proof}\mbox{}

By \cite[Proposition 2.10]{ChiVya}, we know that \[\tor_{m}(H_{1})=\llangle \tor_{m}(K_{\mathcal{M}}) \cup \tor_{m}(C \times \langle b_{1}, \ldots, b_{n}\vert - \rangle)\cup \tor_{m}(\langle d|-\rangle) \rrangle^{H_{1}}.\] However, $K_{\mathcal{M}}$, $\langle b_{1}, \ldots, b_{n} \vert - \rangle$ and $\langle d|-\rangle$ are all torsion-free. It follows that \[\tor_{m}(H_{1})=\llangle \tor_{m}(C) \rrangle^{H_{1}}\] for all $m$. This proves part (1). 
For the second part, observe that there is a map $\mu: H_{1}\to C$ induced by the trivial map on $K_{\mathcal{M}}$ and $\langle d| -\rangle$ and the standard projection to $C$ on $C \times \langle b_{1}, \ldots, b_{n} \vert - \rangle.$ The map $\mu$ restricts to the identity on $C$ and sends $\tor_{m}(H_{1})$ to $\tor_{m}(C).$ The result follows. 
\end{proof}

\begin{lem}\label{H results}
For $i= 1,2$, and for all $m\geq 0$, the following hold: 
 \begin{enumerate}
  \item $\tor_{m}(H_{i+1})=\llangle \tor_{m}(H_{i}) \rrangle^{H_{i+1}}$.
 \item $\tor_{m}(H_{i})$ is good in $H_{i}$  with respect to the HNN extension $H_{i+1}$.
 \end{enumerate}
\end{lem}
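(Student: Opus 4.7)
The plan is to prove parts (1) and (2) simultaneously, case by case on $i \in \{1, 2\}$, by induction on $m$. For each $i$, I expect the goodness assertion (2) to follow essentially directly from Lemma \ref{subset lem} once one identifies $\tor_m(H_i)$ as the normal closure of a subset of $C$, and then (1) to follow by an inductive bootstrap that exploits Corollary \ref{subset lem cor} to convert the goodness of $\tor_m(H_i)$ into an HNN-extension structure on the torsion quotient, combined with the HNN-torsion identity of Lemma \ref{HNN tor lem}.

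For $i = 1$, the key observation is that Lemma \ref{C results}(1) gives $\tor_m(H_1) = \llangle \tor_m(C) \rrangle^{H_1}$, so Lemma \ref{subset lem}(1) immediately yields (2) for all $m \geq 0$. For (1), I induct on $m$: the base case $m=0$ is trivial, and for the inductive step, the inductive hypothesis at level $m$ gives $\tor_m(H_2) = \llangle \tor_m(H_1) \rrangle^{H_2}$, while part (2) gives the goodness of $\tor_m(H_1)$. Hence by Corollary \ref{subset lem cor}, there is a natural isomorphism
\[
H_2/\tor_m(H_2) \;\cong\; (H_1/\tor_m(H_1)) *_{\overline{\psi}}.
\]
Applying Lemma \ref{HNN tor lem} to this HNN extension, and using the identity $\tor_{m+1}(G)/\tor_m(G) = \tor_1(G/\tor_m(G))$ recorded after Definition \ref{defoftorsion}, the preimage computation under $H_2 \twoheadrightarrow H_2/\tor_m(H_2)$ yields
\[
\tor_{m+1}(H_2) \;=\; \llangle \tor_{m+1}(H_1) \rrangle^{H_2} \cdot \tor_m(H_2) \;=\; \llangle \tor_{m+1}(H_1) \rrangle^{H_2},
\]
the last equality holding because $\tor_m(H_2) = \llangle \tor_m(H_1) \rrangle^{H_2} \leq \llangle \tor_{m+1}(H_1) \rrangle^{H_2}$.

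For $i = 2$, the goodness statement (2) follows by combining the just-established (1) for $i=1$ with Lemma \ref{C results}(1):
\[
\tor_m(H_2) \;=\; \llangle \tor_m(H_1) \rrangle^{H_2} \;=\; \llangle \llangle \tor_m(C) \rrangle^{H_1} \rrangle^{H_2} \;=\; \llangle \tor_m(C) \rrangle^{H_2},
\]
and then invoking Lemma \ref{subset lem}(2). Part (1) is then proved by exactly the same induction as for $i=1$, with Corollary \ref{subset lem cor} now applied to $H_3$ as an HNN extension of $H_2$.

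The principal technical obstacle is that $H_3$ has multiple stable letters ($a_1, \ldots, a_{2n}, k$), whereas Lemma \ref{HNN tor lem} as stated covers only a single stable letter. I expect this to be bypassed either by a direct multi-stable-letter analogue, proved in the same way as \cite[Corollary 2.9]{ChiVya} via the torsion theorem for HNN extensions, or by writing $H_3$ as an iterated tower of single HNN extensions of $H_2$ and applying the single-letter Lemma \ref{HNN tor lem} inductively along that tower.
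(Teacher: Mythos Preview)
Your proposal is correct and uses essentially the same ingredients as the paper's proof: Lemma~\ref{subset lem} to get goodness from the identification $\tor_m(H_i)=\llangle \tor_m(C)\rrangle^{H_i}$, Corollary~\ref{subset lem cor} to recognise the torsion quotient as an HNN extension, and Lemma~\ref{HNN tor lem} to pull back $\tor_1$ across that HNN structure. Your organisation is slightly different---you handle $i=1$ completely before $i=2$ and observe that (2) follows directly from Lemma~\ref{C results}(1) and Lemma~\ref{subset lem} without induction, whereas the paper runs a single induction on $m$ treating both values of $i$ at once and derives (2) inside the inductive step---but this is a cosmetic reordering, not a different argument. Your explicit acknowledgement of the multi-stable-letter issue in Lemma~\ref{HNN tor lem} (and the two standard workarounds) is a point the paper leaves implicit.
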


\begin{proof}\mbox{}
We prove this by induction on $m$.  The result is obvious for $m=0$. 

We now come to the inductive step. Let $i\in \{1,2\}$.  Assume the statement is true for $m$. The induction hypothesis tells us that  $\tor_{m}(H_{i+1})=\llangle \tor_{m}(H_{i}) \rrangle^{H_{i+1}}$ and that $\tor_{m}(H_{i})$ is good in $H_{i}$  with respect to the HNN extension $H_{i+1}$. Thus, by Lemma \ref{good normal lem}, \[ \llangle \tor_{m}(H_{i})  \rrangle^{H_{i+1}} \cap H_{i}= \tor_{m}(H_{i}).\] Combining these facts, we see that $ \tor_{m}(H_{i+1}) \cap H_{i} =\tor_{m}(H_{i})$. As a consequence, the inclusion $H_{i}\to H_{i+1}$ induces an embedding \[H_{i}/\tor_{m}(H_{i})\to H_{i+1}/\tor_{m}(H_{i+1});\] via this, we identify $H_{i}/\tor_{m}(H_{i})$ as a subgroup of $H_{i+1}/\tor_{m}(H_{i+1})$.

Using Lemma \ref{good kernel} and Corollary \ref{subset lem cor}, we see that
\[
 H_{i+1}/\tor_{m}(H_{i+1}) = H_{i+1}/\llangle \tor_{m}(H_{i})\rrangle^{H_{i+1}} \cong \langle H_{i}/\tor_{m}(H_{i}); \stable_{i}\  | \ \relations_{i} \rangle
\]
where $\stable_{i}$, $\relations_{i}$ are, respectively, the stable letters and relations of the HNN construction of $H_{i+1}$ from $H_{i}$. It then follows from Lemma \ref{HNN tor lem} that 
\[
 \tor_{1}(H_{i+1}/\tor_{m}(H_{i+1}))  = \llangle  \tor_{1}(H_{i}/\tor_{m}(H_{i})) \rrangle^{H_{i+1}/\tor_{m}(H_{i+1})}.
\]
The preimage of $\tor_{1}(H_{i+1}/\tor_{m}(H_{i+1}))$ in $H_{i+1}$ is  $\tor_{m+1}(H_{i+1})$, and the preimage of $\llangle  \tor_{1}(H_{i}/\tor_{m}(H_{i}) \rrangle^{H_{i+1}/\tor_{m}(H_{i+1})}$ in $H_{i+1}$ is  $\llangle \tor_{m+1}(H_{i}) \rrangle^{H_{i+1}}$. Thus $\tor_{m+1}(H_{i+1})=\llangle \tor_{m+1}(H_{i}) \rrangle^{H_{i+1}}$, and (1) is proved for the case $m+1$.

We have just proved that (1) is true for $m+1$; combining this fact with Lemma \ref{C results} (1),  we see that  \[ \tor_{m+1}(H_{i})=\llangle \tor_{m+1}(H_{i-1})\rrangle^{H_{i}}= \ldots = \llangle \tor_{m+1}(H_{1})\rrangle^{H_{i}}=\llangle \tor_{m+1}(C)\rrangle^{H_{i}}.\] Lemma \ref{subset lem} then tells us that $\tor_{m+1}(H_{i})$ is good in $H_{i}$  with respect to the HNN extension $H_{i+1}$.
\end{proof}

The next corollary now follows from Lemmas \ref{C results}, \ref{H results} and \ref{good normal lem}:

\begin{cor}\label{C cor}
 For $i=1,2,3$, and for all $m\geq 0$, the following hold:
 \begin{enumerate}
 \item $\tor_{m}(H_{i})=\llangle \tor_{m}(C)\rrangle^{H_{i}}.$
 \item $\tor_{m}(H_{i})\cap C=\tor_{m}(C)$.
 \end{enumerate}
\end{cor}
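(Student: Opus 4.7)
The plan is to deduce the corollary from the three cited results by a short induction on $i$, with the base case $i=1$ handled directly by Lemma \ref{C results}.

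For the inductive step on part (1), I would assume $\tor_{m}(H_{i-1}) = \llangle \tor_{m}(C)\rrangle^{H_{i-1}}$ and apply Lemma \ref{H results}(1) to write $\tor_{m}(H_{i}) = \llangle \tor_{m}(H_{i-1})\rrangle^{H_{i}}$. Since the normal closure in $H_i$ of a normal closure in $H_{i-1}$ is just the normal closure in $H_i$, this gives $\tor_{m}(H_{i}) = \llangle \llangle \tor_{m}(C)\rrangle^{H_{i-1}}\rrangle^{H_{i}} = \llangle \tor_{m}(C)\rrangle^{H_{i}}$, as required. This covers $i=2,3$.

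For part (2), I would proceed by induction on $i$ as well. The key observation is that Lemma \ref{H results}(2) says $\tor_{m}(H_{i-1})$ is good in $H_{i-1}$ with respect to the HNN extension $H_{i}$. Combining this with Lemma \ref{good normal lem} yields
\[
\llangle \tor_{m}(H_{i-1}) \rrangle^{H_{i}} \cap H_{i-1} = \tor_{m}(H_{i-1}).
\]
But by Lemma \ref{H results}(1), the left-hand side equals $\tor_{m}(H_{i}) \cap H_{i-1}$. Intersecting both sides with $C$ (which is contained in $H_{i-1}$) gives
\[
\tor_{m}(H_{i}) \cap C = \tor_{m}(H_{i-1}) \cap C,
\]
and the inductive hypothesis finishes the job, with the base $i=1$ supplied by Lemma \ref{C results}(2).

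There is no real obstacle here: the corollary is essentially an iterated bookkeeping exercise combining the "normal closure propagates through HNN extensions" content of Lemma \ref{H results}(1) with the "good subgroups intersect trivially outside the base" content of Lemma \ref{good normal lem}. The only mild care needed is to chain the normal closures at each HNN stage without losing track of which ambient group they are being taken in, and to observe at the very end that $C \subseteq H_1 \subseteq H_2 \subseteq H_3$ so that intersecting with $C$ may be performed after intersecting with any intermediate $H_j$.
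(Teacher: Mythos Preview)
Your proposal is correct and follows exactly the approach the paper intends: the paper's proof consists solely of the sentence that the corollary follows from Lemmas \ref{C results}, \ref{H results}, and \ref{good normal lem}, and you have simply spelled out the chaining of normal closures and intersections that this entails. Indeed, the chain $\tor_{m}(H_{i})=\llangle \tor_{m}(H_{i-1})\rrangle^{H_{i}}= \ldots = \llangle \tor_{m}(C)\rrangle^{H_{i}}$ already appears verbatim in the proof of Lemma \ref{H results}, so your part (1) argument is essentially a restatement of that line.
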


\begin{thm}\label{preserve torlen}
There is a uniform construction that, on input of a recursive presentation of a group $C$, outputs a finite presentation of a group $H$ in which $C$ embeds, with $\torlen(C)=\torlen(H)$.
\end{thm}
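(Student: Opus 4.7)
The plan is to take $H := H_{3}$ produced by applying the Aanderaa-Cohen construction of Section \ref{hig construct} to $C$. First I would address the uniformity clause: going through that construction one step at a time, beginning with a recursive enumeration of the relations of $C$, one algorithmically produces a modular machine $\mathcal{M}$ whose halting set $H_{0}(\mathcal{M})$ encodes the relators (by the standard translation between recursively enumerable sets and modular machines), and each subsequent ingredient---the finitely presented group $K_{\mathcal{M}}$ and the HNN extensions $H_{1}, H_{2}, H_{3}$---is built by explicitly adjoining finitely many generators together with finitely many explicit relations or HNN-relations. The finite presentation of $H_{3}$ that results is therefore uniformly computable from a recursive presentation of $C$, and step (22) of that construction already records $C \hookrightarrow H_{3}$.

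Next I would deduce the equality $\torlen(C) = \torlen(H)$ directly from Corollary \ref{C cor} applied with $i=3$. For $\torlen(H) \leq \torlen(C)$: if $\torlen(C) = k < \omega$, then $\tor_{m}(C) = \tor_{k}(C)$ for every $m \geq k$, and part (1) of the corollary gives $\tor_{m}(H) = \llangle \tor_{m}(C) \rrangle^{H} = \llangle \tor_{k}(C) \rrangle^{H} = \tor_{k}(H)$, so the chain in $H$ stabilises by step $k$. For $\torlen(C) \leq \torlen(H)$: if the chain in $H$ is constant from step $k$ onward, then intersecting with $C$ and using part (2) of the corollary yields $\tor_{m}(C) = \tor_{m}(H) \cap C = \tor_{k}(H) \cap C = \tor_{k}(C)$ for every $m \geq k$. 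These two inequalities together cover both the case of finite torsion length and the case $\torlen(C) = \omega$, where neither chain can stabilise at any finite step.

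The real technical obstacle was already surmounted in Lemma \ref{H results}, where the good-subgroup property was propagated inductively up the tower $C \leq H_{1} \leq H_{2} \leq H_{3}$; it is precisely this that forces the torsion filtrations to pass cleanly to normal closures and to intersect cleanly with $C$, yielding Corollary \ref{C cor}. Once that corollary is in hand, the torsion-length comparison is nearly formal. The only residual subtlety is the uniformity claim, which I would dispatch by noting that every ingredient in Section \ref{hig construct}---the conversion of recursive relators into a modular machine, the formation of each HNN extension, and the amalgamation into a single finite presentation---is an effective operation depending only on the input presentation of $C$.
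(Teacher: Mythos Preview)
Your proposal is correct and follows essentially the same route as the paper: take $H=H_{3}$ from the Aanderaa--Cohen construction and invoke Corollary \ref{C cor} (parts (1) and (2)) to see that the chains $\tor_{m}(C)$ and $\tor_{m}(H)$ stabilise at exactly the same stage. Your write-up is somewhat more explicit about the uniformity clause and the $\omega$ case than the paper's proof, but the substance is the same.
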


\begin{proof}
This is an immediate consequence of Corollary \ref{C cor}, taking $H=H_{3}$. As \[\llangle \tor_{m}(C) \rrangle^{H_{3}}  =\tor_{m}(H_{3})\] for all $m$, $\tor_{m}(C)= \tor_{m+1}(C)$  implies that  $\tor_{m}(H_{3})= \tor_{m+1}(H_{3})$.

Conversely, since \[\tor_{m}(H_{3})\cap C=\tor_{m}(C)\] for all $m$, $\tor_{m}(H_{3})= \tor_{m+1}(H_{3})$ implies that $\tor_{m}(C)= \tor_{m+1}(C)$.

In conclusion, $\tor_{m}(H_{3})= \tor_{m+1}(H_{3})$ if and only if $\tor_{m}(C)=\tor_{m+1}(C)$, for any $m$. Thus the sequences $\tor_{j}(H_{3})$ and $\tor_{j}(C)$ stabilise at precisely the same value of $j$ (if at all), and so $\torlen(H_{3})= \torlen(C)$.
\end{proof}

\begin{thm} \label{txfpg}
There exists a finitely presented group $F$ with $\torlen(F) = \omega$.

\end{thm}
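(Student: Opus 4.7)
The plan is to combine Theorem~\ref{preserve torlen} with the earlier result \cite[Theorem 3.10]{ChiVya}, which produces a $2$-generator recursively presented group $Q$ with $\torlen(Q)=\omega$. Since $Q$ is recursively presented, it admits a recursive presentation, which is precisely the input data required by the uniform construction of Theorem~\ref{preserve torlen}.

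Concretely, I would proceed as follows. First, invoke \cite[Theorem 3.10]{ChiVya} to obtain the recursively presented group $Q$ with $\torlen(Q)=\omega$, together with an explicit recursive presentation of it. Second, feed this recursive presentation into the construction of Section~\ref{hig construct}, identifying $Q$ with the group $C$ of that construction, and let $F:=H_{3}$ be the resulting finitely presented group. Third, apply Theorem~\ref{preserve torlen} to conclude that $\torlen(F)=\torlen(Q)=\omega$. In particular, $F$ is finitely presented, as $H_{3}$ is finitely presented by the last step of Construction~\ref{hig construct}.

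There is essentially no obstacle here: all the substantive work has already been done. The combinatorial control over the Aanderaa--Cohen construction needed to preserve torsion length is encoded in Corollary~\ref{C cor}, and the passage from ``preservation of each $\tor_{m}$'' to ``preservation of $\torlen$'' is the content of Theorem~\ref{preserve torlen}. The only thing to verify is that the hypothesis of Theorem~\ref{preserve torlen} (a recursive presentation) is satisfied by $Q$, which is immediate from its definition in \cite{ChiVya}.
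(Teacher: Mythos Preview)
Your proposal is correct and follows exactly the same approach as the paper: invoke \cite[Theorem 3.10]{ChiVya} to obtain a recursively presented group $Q$ with $\torlen(Q)=\omega$, and then apply Theorem~\ref{preserve torlen} to produce the desired finitely presented $F$. The paper's proof is just a terser statement of this argument.
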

\begin{proof}
In \cite[Theorem 3.10]{ChiVya}, we proved that there is a $2$-generator, recursively presented group with infinite torsion length. We now apply Theorem \ref{preserve torlen}.
\end{proof}

An interesting exercise would be to construct an explicit finite presentation of such a group. Theoretically, this could  be done by carefully following the constructions given above. The presentation that arises as the output of such a process, however, would undoubtedly be very complicated.  A more straightforward presentation, perhaps giving a group that arises elsewhere in the literature, would be interesting.

\section{A word-hyperbolic virtually special example}\label{hyp1B}

We now show various ways of constructing finitely presented virtually special groups with infinite torsion length. We thank Henry Wilton for initially suggesting that this is possible and pointing out an alternate way to prove it.

\begin{defn}
Let $\Gamma$ be an undirected graph on finite vertex set labeled $1, \ldots, n$, and edge set $E$. The \emph{right-angled Artin group} (RAAG), $A(\Gamma)$, associated to $\Gamma$ is the group with presentation \[\langle x_{1}, \ldots, x_{n} \ | \ [x_{i}, x_{j}]\  \forall \ \{i,j\} \in E\rangle.\] 

A group $G$ is said to be \emph{special} if it is a subgroup of some RAAG. More generally, a group $G$ is said to be \emph{virtually special} if it contains a finite index subgroup which is special.
\end{defn}

Every RAAG on $n$ generators can be seen as an HNN extension of a RAAG on $n-1$ generators; it follows that every RAAG is torsion-free, and thus that virtually special groups are virtually torsion-free.

For the remainder of this section, if $P=\langle X|R \rangle$ is a group presentation we denote by $\overline{P}$ the group presented by $P$, and if $w\in X^{*}$ is a word in the generators of $P$ then we denote by $\overline{w}$ the element of $\overline{P}$ represented by $w$.

\begin{defn}
 Let $P=\langle X|R\rangle$ be a finite presentation where each $r\in R$ is freely reduced and cyclically reduced (as a word in $X^{*}$), and where $R$ is \emph{symmetrised} (i.e.,  closed under taking cyclic permutations and inverses).
 \\A nonempty freely reduced word $w\in X^{*}$ is called a \emph{piece} with respect to $P$ if there exist two distinct elements $r_{1}, r_{2} \in R$ that have $w$ as maximal common initial segment.
 \\Let $0 < \lambda < 1$. Then $P$ is said to satisfy the $C'(\lambda)$ \emph{small cancellation condition} if whenever $w$ is a piece with respect to $P$ and $w$ is a subword of some $r \in R$, then $|u| < \lambda|r|$ as words.
 \\ A group is called a $C'(\lambda)$ group if it can be presented by a presentation satisfying the $C'(\lambda)$ \emph{small cancellation condition}.
\end{defn}

If  $P=\langle X|R\rangle$  is a finite presentation of a group $G$ where $R$ is not symmetrised, we can take the symmetrised closure $R_{sym}$ of $R$, where $R_{sym}$ consists of all cyclic permutations of words in $R$ and $R^{-1}$ (with repetitions removed). Then $R_{sym}$ is symmetrised and $P_{sym}=\langle X| R_{sym}\rangle$  is also a presentation of $G$. In a slight abuse of notation, we call the presentation $P=\langle X  |R \rangle$ symmetrised if $R$ is symmetrised.

The following theorem is a consequence of the substantial results of Agol \cite{Agol} and Wise \cite{Wise}.

\begin{thm}\label{agol wise}
 Let $P=\langle X|R \rangle$ be symmetrised and satisfy the $C'(1/6)$ small cancellation condition. Then $\overline{P}$ is both word-hyperbolic and virtually special.
\end{thm}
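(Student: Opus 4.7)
The plan is to derive the two conclusions separately, each invoking established machinery.

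For hyperbolicity, I would appeal to the classical Greendlinger lemma for $C'(1/6)$ presentations: if $w \in X^*$ is nontrivial and represents the identity in $\overline{P}$, then $w$ must contain, as a subword, more than half of some relator $r \in R$. This is pure small cancellation theory and does not require any $C'(1/6)$-specific advances beyond what appears in Lyndon--Schupp. It immediately implies Dehn's algorithm solves the word problem, hence a linear isoperimetric inequality for the presentation. By Gromov's characterization of word-hyperbolic groups via linear Dehn functions, $\overline{P}$ is hyperbolic.

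For virtual speciality, I would combine two deep results. First, by Wise's cubulation of $C'(1/6)$ small cancellation groups \cite{Wise}, one constructs a system of \emph{hypergraph walls} in the Cayley $2$-complex of $P$, transverse to the relator cells; Wise proves that $\overline{P}$ acts properly and cocompactly on the CAT(0) cube complex dual to this wall structure. Combined with the hyperbolicity established in the previous step, this exhibits $\overline{P}$ as a cocompactly cubulated hyperbolic group. Second, Agol's theorem \cite{Agol}, which resolves Wise's virtually compact special conjecture in the hyperbolic case, then implies that $\overline{P}$ is virtually special.

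The main obstacle is that the proof is essentially a citation: both Wise's cubulation of small cancellation groups and Agol's virtually special theorem are substantial results whose proofs are entirely beyond the scope of this paper. The only real work on our end is to observe that the hypotheses of Wise's cubulation theorem and of Agol's theorem are exactly what is supplied by a symmetrised $C'(1/6)$ presentation together with the hyperbolicity that Greendlinger's lemma yields; once those hypotheses are checked, the conclusion is immediate.
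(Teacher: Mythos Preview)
Your proposal is correct and matches the paper's approach exactly: the paper's proof simply states that $C'(1/6)$ groups are known to be hyperbolic, then invokes \cite[Theorem 1.2]{Wise} and \cite[Theorem 1.1]{Agol} for virtual speciality. You have merely unpacked the standard justification for hyperbolicity (Greendlinger $\Rightarrow$ Dehn's algorithm $\Rightarrow$ linear Dehn function) and spelled out the logical structure of the Wise--Agol citation, which is precisely what the paper leaves implicit.
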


\begin{proof}
$C'(1/6)$ groups are known to be word-hyperbolic. One then uses \cite[Theorem 1.2]{Wise} and \cite[Theorem 1.1]{Agol} to show that $\overline{P}$ is virtually special.
\end{proof}

\begin{prop}\label{powersB}
 Let $P=\langle x_{1}, \ldots, x_{m} \ | \ r_{1}, \ldots, r_{n}\rangle$ be a finite presentation, with all words in $R$ freely reduced, cyclically reduced, and distinct. For any $k \in \mathbb{N}$, define the finite presentation
 \[
 P^{k}_{t}:=\langle x_{1}, \ldots, x_{m}, t \ | \ (r_{1}t)^{k}, \ldots, (r_{n}t)^{k}, t^{k}\rangle
 \]
Then  $(P^{k}_{t})_{sym}$ is symmetrised and satisfies the $C'(2/k)$ small cancellation condition, and thus  $\overline{P^{k}_{t}}$ is word-hyperbolic and virtually special for all  $k \geq 12$.
\end{prop}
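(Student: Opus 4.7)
The plan is to verify the $C'(2/k)$ condition directly for the symmetrization $(P^k_t)_{sym}$; the word-hyperbolicity and virtual specialness for $k \geq 12$ then follow from Theorem \ref{agol wise}, since $2/k \leq 1/6$ precisely when $k \geq 12$. First I would check that $t^k$ and each $(r_i t)^k$ are freely and cyclically reduced in the generating set $\{x_1,\ldots,x_m,t\}$: since $t$ does not occur in any $r_i$, no cancellation can occur at the joints $r_i \cdot t$ nor at the cyclic join between a final $t$ and the initial letter of $r_i$. Distinctness of the resulting relators follows from the distinctness of the $r_i$, so $(P^k_t)_{sym}$ is a well-defined symmetrized presentation.

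The key structural fact is that in any cyclic permutation of $(r_i t)^{\pm k}$ the letters $t^{\pm 1}$ occupy positions exactly $|r_i|+1$ apart, with each gap filled by a copy of $r_i^{\pm 1}$; this is forced by the absence of $t$ in $r_i$. Let $w$ be a piece with respect to $(P^k_t)_{sym}$, realised as a common initial segment of distinct relators $R_1, R_2$. I would case-analyse on the occurrences of $t^{\pm 1}$ in $w$. If $R_1$ and $R_2$ have opposite $t$-sign, or if one is a cyclic permutation of $t^{\pm k}$, then at the first position where either of $R_1, R_2$ has a letter $t^{\pm 1}$ they disagree; hence $w$ contains no $t^{\pm 1}$ and lies inside a single copy of $r_i^{\pm 1}$, giving $|w| \leq |r_i| < 2(|r_i|+1)$. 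Otherwise $R_1, R_2$ are cyclic permutations of $(r_i t)^k$ and $(r_j t)^k$ of the same sign; if $w$ contains two $t$'s with no $t$ between them, the intervening subword equals $r_i$ (read from $R_1$) and $r_j$ (read from $R_2$), forcing $r_i = r_j$, and moreover the positions of these two $t$'s in the cyclic word of $(r_i t)^k$ determine the starting position of each of $R_1, R_2$, forcing $R_1 = R_2$ and contradicting distinctness. So $w$ contains at most one $t$.

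If $w$ contains exactly one $t$, writing $w = u\, t\, v$ we see that $u$ is a suffix and $v$ a prefix of $r_i$ (read from $R_1$), each of length at most $|r_i|$, so $|w| \leq 2|r_i|+1 < 2(|r_i|+1)$. Pieces inside $t^{\pm k}$ satisfy $|w| \leq 1 < 2$ by the same reasoning. Since every symmetrized relator has length $k$ or $k(|r_i|+1)$, these bounds are precisely the $C'(2/k)$ inequality $|w| < (2/k)|R|$. The main bookkeeping subtlety will be checking alignment in the one-$t$ case: the unique $t$ in $w$ must sit at the same position when $w$ is read as an initial segment of $R_1$ or of $R_2$, which forces the suffix-of-$r_i$ and suffix-of-$r_j$ portions of $w$ to have the same length and agree with the corresponding prefix of $w$. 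Once $C'(2/k)$ is in place, Theorem \ref{agol wise} completes the proof for $k \geq 12$.
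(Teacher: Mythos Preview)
Your approach is essentially the same as the paper's: both exploit the fact that the letter $t$ acts as a delimiter separating copies of the $r_i$, so that a piece can span at most two such ``blocks''. The paper organises the case analysis by pairs of relator types, while you organise it by counting occurrences of $t$ in the piece; these are equivalent.

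One small slip: your claim that when one of $R_1,R_2$ is a cyclic permutation of $t^{\pm k}$ the relators disagree at the first position where either has a $t^{\pm 1}$ is not quite right. If $R_1=t^k$ and $R_2$ is a cyclic permutation of $(r_jt)^k$ beginning with $t$, they agree at position $1$; the piece is then the single letter $t$, so it does contain a $t^{\pm 1}$. Your later sentence ``pieces inside $t^{\pm k}$ satisfy $|w|\le 1$'' is the correct bound here, so the argument is ultimately fine, but the intermediate assertion that such a $w$ ``lies inside a single copy of $r_i^{\pm 1}$'' should be dropped for this case. Also, in the one-$t$ case you should note (as the paper does) that the same reading from $R_2$ gives $|u|,|v|\le |r_j|$, so that $|w|\le 2\min\{|r_i|,|r_j|\}+1$; this is what yields $|w|<(2/k)|R_2|$ as well as $|w|<(2/k)|R_1|$.
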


\begin{proof}
Let $S=\{ (r_{1}t)^{k}, \ldots, (r_{n}t)^{k}, t^{k} \}$. We first need to check that every $s \in S_{sym}$  is freely and cyclically reduced. But this follows from the fact that $R$ is freely and cyclically reduced, along with the strategic placements of the $t$'s. By definition, $S_{sym}$ is symmetrised. We now show that $S_{sym}$ satisfies the  $C'(2/k)$ small cancellation condition. 
\\1. Any cyclic permutation of $(r_{i}t)^{k}$ shares a piece with $t^{k}$ of length at most one (and no piece with $t^{-k}$). Similarly, any cyclic permutation of $(r_{i}t)^{-k}$ shares a piece with $t^{-k}$ of length at most one (and no piece with $t^{k}$). Such pieces have length at most $1/k$ of either word.
\\2.  Consider shared pieces of cyclic permutations of pairs of words of the form $(r_{i}t)^{k}$ and $(r_{j}t)^{k}$. If $r_{i}=ab$ and $r_{j}=cd$, where $a,b,c,d$ are words, then we are left with considering words of the form $bt(r_{i}t)^{k-1}a$ and $dt(r_{j}t)^{k-1}c$ respectively. As $r_{i}\neq r_{j}$, the initial overlap of these can be at most $bt\equiv dt$, followed by some overlap of $r_{i} $ and $r_{j}$ (of length at most $\min\{|r_{i}|, |r_{j}|\}$, as $r_{i }\neq r_{j}$ and $t$ is acting as an end marker). So this initial overlap can have length at most $\min\{2|r_{i}|, 2|r_{j}|\}+1$ which is less than $2/k$ of the length of either word.
\\3. By repeating the arguments as in step 2, we can show that cyclic permutations of pairs of words of the form $(r_{i}t)^{-k}$ and $(r_{j}t)^{-k}$ overlap at most $2/k$ of the length of either word.
\\4. We now consider shared pieces of cyclic permutations of pairs of words of the form $(r_{i}t)^{k}$ and $(r_{j}t)^{-k}=(t^{-1}r_{j}^{-1})^{k}$. If $r_{i}=ab$ and $r_{j}=cd$, where $a,b,c,d$ are words, then the words we are considering must be of the form $bt(r_{i}t)^{k-1}a$ and $c^{-1}(t^{-1}r_{j}^{-1})^{(k-1)}t^{-1}d^{-1}$ respectively. An initial overlap cannot involve $t$ or $t^{-1}$, and thus has length at most $\min\{|r_{i}|, |r_{j}|\}$; this is less than $1/k$ of the length of either word.

If follows that  $(P^{k}_{t})_{sym}$ satisfies the $C'(2/k)$ small cancellation condition; we appeal to Theorem \ref{agol wise} to finish the proof.
\end{proof}

The following standard result was first proved in \cite{Gre}; see \cite[Theorem 6]{Sch} for an explicit statement of the result.

\begin{lem}[{\cite[Theorem VIII]{Gre}}]\label{C 1/6}
 Let $P=\langle X|R\rangle$ satisfy the $C'(1/6)$ small cancellation condition. Then an element $g \in \overline{P}$ has order $n > 1$ if and only if there is a relator $r \in R$ of the form $r = s^{n}$ in $X^{*}$, with $s\in X^{*}$, such that $g$ is conjugate to $\overline{s}$ in $\overline{P}$.
\end{lem}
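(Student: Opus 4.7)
The natural framework for this statement is van Kampen diagram theory together with Greendlinger's lemma for $C'(1/6)$ presentations: in any reduced van Kampen diagram containing at least one $2$-cell, some $2$-cell $\Delta$ has a connected arc of $\partial \Delta$, of length strictly greater than $|r_\Delta|/2$, lying on the outer boundary of the diagram. My plan is to apply this once for the easy direction (to a diagram witnessing $\overline{s}^{m}=e$) and once for the hard direction (to a diagram witnessing $g^{n}=e$), and then to translate the diagrammatic output into the algebraic conclusion.

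For ($\Leftarrow$): suppose $r=s^{n}\in R$ and $g$ is conjugate to $\overline{s}$. Then $\overline{g}^{\,n}=\overline{s^{n}}=e$, so the order of $g$ divides $n$. After replacing $s$ by its root (and $n$ by a multiple), I may assume $s$ is not a proper power as a word, with $s^{n}$ still a relator. If $\overline{s}^{\,m}=e$ for some $1\le m<n$, take a reduced van Kampen diagram $D$ with boundary label $s^{m}$. Greendlinger's lemma yields a $2$-cell $\Delta$ whose boundary label, a cyclic permutation of some $r'\in R$, shares more than $|r'|/2$ of its length with $s^{m}$. Since the $C'(1/6)$ hypothesis says every piece has length less than $|r'|/6$, this overlap is not a piece, so $r'$ must be a cyclic permutation of $s^{n}$ itself; but then the length of the overlap compared with $|s^{m}|$ forces $m\ge n$, a contradiction.

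For ($\Rightarrow$): pick a shortest cyclically reduced word $w\in X^{*}$ representing a conjugate of $g$. Then $w^{n}=_{\overline{P}}e$ and there is a reduced van Kampen diagram $D$ with cyclic boundary word $w^{n}$. Apply Greendlinger's lemma to extract a $2$-cell $\Delta$ whose boundary label $r_\Delta\in R$ has a subword of length $>|r_\Delta|/2$ reading along the cyclic word $w^{n}$. Because pieces have length $<|r_\Delta|/6$, this long arc cannot be a single piece; combined with the periodicity of the boundary word $w^{n}$, a Fine-and-Wilf-type argument forces $r_\Delta$ itself to be periodic of period $|w|$. Minimality of $|w|$ and the length match $|r_\Delta|=n|w|$ then identify $r_\Delta$ with a cyclic conjugate of $w^{n}$, so $r_\Delta=s^{n}$ for some cyclic conjugate $s$ of $w$. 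Since $g$ is conjugate to $\overline{w}$, it is also conjugate to $\overline{s}$, completing the argument.

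The principal obstacle is the extraction step in ($\Rightarrow$): upgrading a "more than half the boundary lies on $\partial D$" conclusion into the precise statement that $r_\Delta$ is literally an $n$-th power $s^{n}$ with $s$ a cyclic conjugate of $w$. The periodicity argument needs both the Fine-and-Wilf observation that two long shifted occurrences of a subword force global periodicity, and the $C'(1/6)$ piece bound to exclude non-periodic alternatives; a subtle point is that one must rule out the possibility that $r_\Delta$ has a strictly shorter period than $|w|$, which would contradict the minimality of $|w|$ as a cyclically reduced conjugate representative of $g$. Once this is handled, all the remaining conclusions follow by direct length comparisons.
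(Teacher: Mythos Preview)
The paper does not prove this lemma at all: it is stated as a standard result, attributed to Greendlinger (with Schupp cited for the explicit formulation), and then used as a black box in the proof of Lemma~\ref{clever torsion}. There is thus nothing in the paper to compare your argument against.

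On the substance of your sketch: the $(\Leftarrow)$ direction is essentially right, though you should take $m$ to be the \emph{order} of $\overline{s}$ so that $m\mid n$; your diagram argument then yields only $m>n/2$, and it is the divisibility $m\mid n$ that forces $m=n$. The $(\Rightarrow)$ direction, however, has a genuine gap. Greendlinger's lemma hands you a single $2$-cell $\Delta$ with an outer arc of length $>|r_\Delta|/2$; this arc, as a subword of $w^{n}$, is $|w|$-periodic, but that is periodicity of a \emph{proper subword} of $\partial\Delta$, not of $r_\Delta$ itself. Fine--Wilf does not extend periodicity from a subword to the whole word without further input, and nothing you have written establishes the asserted ``length match $|r_\Delta|=n|w|$'': a priori $r_\Delta$ is an arbitrary element of $R$, unrelated in length to $n|w|$. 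One application of Greendlinger to a disk diagram for $w^{n}$ is simply not enough. The classical proofs (Greendlinger's original, or the treatment in Lyndon--Schupp, Chapter~V) proceed either via annular diagrams, or by an iterated reduction that tracks how the $n$-fold periodicity of the boundary interacts with successive boundary cells; the step you yourself flag as ``the principal obstacle'' really is the heart of the theorem and needs substantially more than what you have outlined.
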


\begin{lem}\label{clever torsion}
 Let $P=\langle x_{1}, \ldots, x_{m} \ | \ r_{1}, \ldots, r_{n}\rangle$ be a finite presentation, with all words in $R$ freely reduced, cyclically reduced, and distinct. Let $P_{t}^{k}$ be as before, with $k \geq 12$. Then
 \[
  \tor_{1}(\overline{P_{t}^{k}})=\llangle \overline{r_{1}t}, \ldots, \overline{r_{n}t}, \overline{t} \rrangle^{\overline{P_{t}^{k}}}
 \]
\end{lem}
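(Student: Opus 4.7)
The plan is to apply Greendlinger's torsion lemma (Lemma \ref{C 1/6}), which is valid for the symmetrised presentation $(P_t^k)_{sym}$ since $2/k \leq 1/6$ for $k \geq 12$ (Proposition \ref{powersB}). Write $N := \llangle r_1 t, \ldots, r_n t, t \rrangle^{\overline{P_t^k}}$. The easy inclusion $N \subseteq \tor_1(\overline{P_t^k})$ is immediate: since $(r_i t)^k$ and $t^k$ are literally relators of $P_t^k$, the elements $\overline{r_i t}$ and $\overline{t}$ have order dividing $k$, so each lies in $\tor(\overline{P_t^k}) \subseteq \tor_1(\overline{P_t^k})$.

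For the reverse inclusion, let $g \in \overline{P_t^k}$ be a torsion element of order $n > 1$. Lemma \ref{C 1/6} provides $s \in X^*$ with $g$ conjugate to $\overline{s}$ in $\overline{P_t^k}$ and with $s^n$ equal, as a freely reduced word, to some symmetrised relator. A direct calculation shows that every symmetrised relator of $P_t^k$ is of the form $w^k$, where $w$ is a cyclic permutation of one of $r_i t$, $t^{-1} r_i^{-1}$, $t$, or $t^{-1}$: e.g.\ shifting $(uv\cdot t)^k$ by $|u|$ positions yields $(vtu)^k$, and the cases $t^{\pm k}$ are trivial. Thus $s^n = w^k$ in $X^*$ for some such $w$.

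The key observation is that no such $w$ is a proper power in $X^*$. Since $t$ does not appear in any $r_i$, the signed $t$-exponent of $w$ (occurrences of $t$ minus occurrences of $t^{-1}$, which is invariant under cyclic permutation) equals $\pm 1$; if $w = z^b$ with $b \geq 2$ then the $t$-exponent of $z$ would be $\pm 1/b$, impossible. Root uniqueness in the free group $F(X)$, applied to $s^n = w^k$ with $w$ not a proper power, forces $s = w^a$ for some $a \geq 1$ with $an = k$, so $\overline{s} = \overline{w}^a$. Since $w$ is a cyclic permutation of $r_i t^{\pm 1}$ or $t^{\pm 1}$, $\overline{w}$ is a conjugate in $\overline{P_t^k}$ of $\overline{r_i t}^{\pm 1}$ or $\overline{t}^{\pm 1}$, and thus lies in $N$; hence $\overline{s} \in N$ and, by normality of $N$, $g \in N$. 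This proves $\tor(\overline{P_t^k}) \subseteq N$ and therefore $\tor_1(\overline{P_t^k}) \subseteq N$. The only delicate point is the ``$w$ is not a proper power'' step, where the hypothesis $t \notin \{x_1, \ldots, x_m\}$ is crucial; the rest is a direct combination of Greendlinger's lemma and root uniqueness in $X^*$.
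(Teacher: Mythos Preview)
Your proof is correct and shares the paper's overall skeleton (apply Greendlinger's torsion lemma, then show that the root $s$ of any torsion element already lies in $N$), but the combinatorial core is genuinely different. After reducing via a cyclic permutation to an equation $q^n=(r_it)^k$ in $X^*$, the paper argues by a direct case split on $|q|$ versus $|r_it|$ (using that $t$ must appear in $q$) to conclude $q=(r_it)^z$. You instead first observe that \emph{every} symmetrised relator is $w^k$ with $w$ a cyclic permutation of $r_it^{\pm 1}$ or $t^{\pm 1}$, then use the signed $t$-exponent to show such $w$ is primitive, and finish with root uniqueness in the free group to get $s=w^a$. Your route is slightly more conceptual and avoids the length bookkeeping; the paper's route is a touch more elementary in that it does not invoke the root-uniqueness theorem for free groups. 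Both yield the result with comparable effort.
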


\begin{proof}
(In this proof, we take all normal closures to be in $\overline{P_{t}^{k}}$.)

Clearly $\{\overline{r_{1}t}, \ldots, \overline{r_{n}t}, \overline{t} \}$ are all torsion elements in $\overline{P_{t}^{k}}$, and so we have that $\llangle \overline{r_{1}t}, \ldots, \overline{r_{n}t}, \overline{t} \rrangle  \subseteq \tor_{1}(\overline{P_{t}^{k}})$.

To show the converse, it suffices to show that $\tor(\overline{P_{t}^{k}})\subseteq \llangle \overline{r_{1}t}, \ldots, \overline{r_{n}t}, \overline{t} \rrangle$. So take some torsion element $g \in \tor(\overline{P_{t}^{k}})$ with $\ord(g)=n$. By Proposition \ref{powersB}, $(P_{t}^{k})_{sym}$ satisfies the $C'(1/6)$ small cancellation condition; thus, by Lemma \ref{C 1/6} $g$ is conjugate to some $\overline{s}$ with $s^{n}= r$ for some relator $r$ of $(P_{t}^{k})_{sym}$. If $r=t^{k}$ or $t^{-k}$ then $s$ is a power of $t$ and so $g$ is conjugated into $\llangle \overline{r_{1}t}, \ldots, \overline{r_{n}t}, \overline{t} \rrangle$.
\\Otherwise, $s^{n}$ is equal to some cyclic permutation of some $(r_{i}t)^{k}$ or $(r_{i}t)^{-k}$; it is enough to just consider the first case. Then, there is some cyclic permutation $q$ of $s$ such that $q^{n}= (r_{i}t)^{k}$ as words in $X^{*}$. 
\\What word $q$ can we have which satisfies  $(r_{i}t)^{k} = q^{n}$? If $|q|<|r_{i}|$, then $q$ contains no $t$ and thus $q^{n}$ contains no $t$; a contradiction. If $|q|=|r_{i}t|$ then $q=r_{i}t$ and so $g$ is conjugate to $\overline{q}=\overline{r_{i}t}$ which clearly lies in $\llangle \overline{r_{1}t}, \ldots, \overline{r_{n}t}, \overline{t} \rrangle$. If $|q|>|r_{i}t|$ then $q=(r_{i}t)^{z}a$, where $r_{i}=ab$ is a decomposition of $r_{i}$. Thus $q^{n}=((r_{i}t)^{z}a)^{n}$, and this can only be equal to $(r_{i}t)^{k}$ if $a=\emptyset$. In this case $q=(r_{i}t)^{z}$ for some $z$, and so $g$ is conjugate to $\overline{q}=\overline{(r_{i}t)^{z}}$ which lies in $\llangle \overline{r_{1}t}, \ldots, \overline{r_{n}t}, \overline{t} \rrangle$. Thus $\tor(\overline{P_{t}^{k}})\subseteq \llangle \overline{r_{1}t}, \ldots, \overline{r_{n}t}, \overline{t} \rrangle$.
\end{proof}

\begin{thm}\label{exact tor quot}
 Let $P$ be a finite presentation with all words in $R$ freely reduced, cyclically reduced, and distinct. Then, for any $k \geq 12$, $\overline{P_{t}^{k}}$ is word-hyperbolic, virtually special, and satisfies
 \[
\overline{P_{t}^{k}}/\tor_{1}(\overline{P_{t}^{k}})  \cong \overline{P} 
 \]
 Thus, in this case, $\torlen(\overline{P_{t}^{k}})=\torlen(\overline{P})+1$.
\end{thm}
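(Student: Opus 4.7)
The hyperbolic and virtually special conclusions are already bundled into Proposition \ref{powersB}, so the genuine content of the theorem is the identification of $\overline{P_t^k}/\tor_1(\overline{P_t^k})$ with $\overline{P}$, together with the torsion length identity. I would split the proof into these two steps.

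For the isomorphism, my plan is to feed the explicit description of $\tor_1(\overline{P_t^k})$ supplied by Lemma \ref{clever torsion} directly into the quotient. Write $G = \overline{P_t^k}$. Then $G/\tor_1(G)$ is presented by $P_t^k$ with the additional relators $t$ and $r_it$ for $1 \leq i \leq n$. The relator $t$ kills the generator $t$; each relator $r_it$ then reduces to $r_i$; the relators $(r_it)^k$ and $t^k$ become consequences of these and can be discarded. A short Tietze sequence (eliminating the generator $t$ as well) leaves exactly the presentation $P$, yielding $G/\tor_1(G) \cong \overline{P}$.

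For the torsion length statement, set $H = \overline{P}$ and let $\pi \colon G \twoheadrightarrow H$ be the projection. I would show by induction on $i \geq 0$ that $\pi^{-1}(\tor_i(H)) = \tor_{i+1}(G)$. The base case $i=0$ is the isomorphism just established. For the inductive step I would invoke the formula $\tor_{j+1}(\cdot)/\tor_j(\cdot) = \tor_1(\cdot/\tor_j(\cdot))$ (recorded after Definition \ref{defoftorsion}), combined with the canonical isomorphism $G/\tor_{i+1}(G) \cong H/\tor_i(H)$ obtained from the induction hypothesis. Under this identification, the torsion subsets on either side correspond, and hence so do their normal closures, since normal closures are preserved by surjections. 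Passing to unions handles $\tor_\omega$. This preimage bijection transports the strictly increasing chain $\{\tor_i(H)\}$ to a strictly increasing chain $\{\tor_{i+1}(G)\}$ sitting above $\tor_1(G)$, so $\torlen(G) = \torlen(H) + 1$.

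The only remaining issue is to check that $\torlen(G) \geq 1$, i.e.\ that $G$ actually has nontrivial torsion, so that the shift by one is genuine. This is where I expect a tiny amount of care is needed rather than any real obstacle: the abelianisation $G^{\ab}$ surjects onto $\mathbb{Z}/k$ via $t \mapsto 1$, $x_j \mapsto 0$, because every defining relator $(r_it)^k$ and $t^k$ maps to $0 \bmod k$. Hence $t$ has order exactly $k$ in $G$, giving $\{e\} = \tor_0(G) \subsetneq \tor_1(G)$. With this in place the whole argument is formal, and the main bookkeeping step is really just the inductive preimage identity relating the two torsion chains.
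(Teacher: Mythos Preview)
Your proof is correct and follows the paper's approach exactly: the paper's own proof is a single sentence invoking Proposition~\ref{powersB} and Lemma~\ref{clever torsion}, and you have simply spelled out the Tietze moves giving $\overline{P_t^k}/\tor_1(\overline{P_t^k})\cong\overline{P}$ and the routine induction identifying $\pi^{-1}(\tor_i(\overline{P}))=\tor_{i+1}(\overline{P_t^k})$. Your explicit check that $t$ is nontrivial (so $\tor_1(\overline{P_t^k})\neq\{e\}$) via the surjection to $\mathbb{Z}/k$ is a detail the paper omits but which is genuinely needed in the case $\torlen(\overline{P})=0$.
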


\begin{proof}
This follows immediately from Proposition \ref{powersB} and Lemma \ref{clever torsion}.
\end{proof}

\begin{rem}
One may ask why the introduction of the extra generator $t$ is necessary when constructing $P_{t}^{k}$. It is indeed true that, given a finite presentation $P=\langle x_{1}, \ldots, x_{m} \ | \ r_{1}, \ldots, r_{n}\rangle$, the finite presentation $Q=\langle x_{1}, \ldots, x_{m} \ | \ r_{1}^{k_{1}}, \ldots, r_{n}^{k_{n}}\rangle$ (where  $k_{1}, \ldots, k_{n} \in \mathbb{N}_{\geq 1}$) presents a group $\overline{Q}$ with $\torlen(\overline{Q})-1\leq \torlen(\overline{P})\leq \torlen(\overline{Q})$. The reader can easily verify this. However, it is not necessarily the case that $\overline{Q}/\tor_{1}(\overline{Q}) \cong \overline{P}$. As an example, we can consider $P=\langle x,y,z | x, y^{3}, xy=z^{3} \rangle$ and $Q=\langle x,y,z | x^{3}, y^{3}, xy=z^{3} \rangle.$ It is clear that $P$ is just a presentation for the cyclic group with $9$ elements, $C_{9}$.  On the other hand, by \cite[Proposition 3.1]{ChiVya}, $\overline{Q}/\tor_{1}(\overline{Q}) \cong C_{3}.$
\end{rem}

\begin{thm}\label{fp hyp vspecialB}
 There is a finitely presented word-hyperbolic virtually special group $W$ with $\torlen(W) = \omega$. In particular, $W$ is virtually torsion-free.
\end{thm}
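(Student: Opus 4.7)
The plan is to combine Theorem~\ref{txfpg} with Theorem~\ref{exact tor quot}: the former provides a finitely presented group of infinite torsion length, and the latter gives a uniform way to pass from any finitely presented group $G$ to a word-hyperbolic, virtually special group whose quotient by $\tor_{1}$ is $G$.

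More concretely, first invoke Theorem~\ref{txfpg} to obtain a finitely presented group $F$ with $\torlen(F) = \omega$, and fix some finite presentation $P = \langle x_{1}, \ldots, x_{m} \mid r_{1}, \ldots, r_{n}\rangle$ of $F$. By freely reducing, cyclically reducing, and removing duplicates among the relators (none of which alters the presented group), arrange that $P$ satisfies the hypotheses of Theorem~\ref{exact tor quot}. Pick any $k \geq 12$ and set $W := \overline{P^{k}_{t}}$. Theorem~\ref{exact tor quot} immediately gives that $W$ is finitely presented, word-hyperbolic, virtually special, and $W / \tor_{1}(W) \cong F$.

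It remains to show that $\torlen(W) = \omega$. Let $\pi : W \twoheadrightarrow W/\tor_{1}(W) \cong F$ be the quotient map. A straightforward induction on $n$, using the identity $\tor_{m+1}(G)/\tor_{m}(G) = \tor_{1}(G/\tor_{m}(G))$ from the remark after Definition~\ref{defoftorsion}, establishes
\[
\tor_{n+1}(W) \;=\; \pi^{-1}(\tor_{n}(F)) \quad \text{for every } n \geq 0.
\]
Since $\torlen(F) = \omega$ forces $\tor_{n}(F) \subsetneq \tor_{n+1}(F)$ for all finite $n$, the chain $\tor_{n}(W)$ is likewise strictly increasing, so no finite stabilisation can occur and $\torlen(W) = \omega$. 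Virtual torsion-freeness of $W$ is then automatic from virtual speciality, since every RAAG is torsion-free.

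No real obstacle appears in this argument: the work has already been absorbed into Theorem~\ref{txfpg} (a delicate construction via a Higman-type embedding) and Theorem~\ref{exact tor quot} (a small-cancellation computation). The one point demanding any attention is the torsion-length calculation above, which is the infinite analogue of the identity $\torlen(\overline{P^{k}_{t}}) = \torlen(\overline{P}) + 1$ proved in Theorem~\ref{exact tor quot} for finite torsion length; the pre-image argument via $\pi$ handles the $\omega$ case uniformly.
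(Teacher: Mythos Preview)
Your proposal is correct and follows essentially the same route as the paper: take a finitely presented group of torsion length $\omega$ via Theorem~\ref{txfpg}, apply the $P^{k}_{t}$ construction, and invoke Theorem~\ref{exact tor quot}. Your explicit preimage argument for $\torlen(W)=\omega$ is in fact a bit more careful than the paper's one-line computation $\torlen(\overline{P_{t}^{12}})=\torlen(\overline{P})+1=\omega$, since the identity $\torlen(\overline{P_{t}^{k}})=\torlen(\overline{P})+1$ from Theorem~\ref{exact tor quot} is literally an equation of ordinals and $\omega+1\neq\omega$; your argument correctly handles this edge case.
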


\begin{proof}
 Let $P$ be a finite presentation of a group with infinite torsion length; such things exist, by Theorem \ref{txfpg}. Then, by Theorem \ref{exact tor quot}, $\overline{P_{t}^{12}}$ is hyperbolic and virtually special, and $\torlen(\overline{P_{t}^{12}})=\torlen(\overline{P})+1=\omega$. Take $W=\overline{P_{t}^{12}}$.
\end{proof}

\begin{rem}\label{Bum remark}
The main construction in \cite{BumWis} can be used to obtain  a similar result to Theorem \ref{exact tor quot} above. Given a finite presentation $P=\langle A|R\rangle$,  we see in equation (4) of \cite[pp.~141]{BumWis} an explicit finite presentation of a $C'(1/6)$ group $G$ and $N\vartriangleleft G$ such that $G/N\cong \overline{P}$, and moreover that $\overline{P}$ is isomorphic to $\out(N)$ (\cite[Theorem 11]{BumWis}). Further analysis shows that $N=\tor_{1}(G)$, normally generated by $2$ elements. However,  both the finite presentation of $G$ in \cite{BumWis} and its manner of construction seem to be substantially more complicated than the finite presentation $P_{t}^{12}$ constructed above
\end{rem}

\begin{rem}
In \cite[Lemma 2.3]{ChiVya} we showed that $\tor_{i}(H) \leq \tor_{i}(G)$ whenever $H \leq G$. However, this does not extend to bounding torsion length of subgroups, even for finitely presented groups. Using the fact that there are finitely presented groups of any torsion length (\cite[Theorem 3.3]{ChiVya}), including $\omega$ (Theorem \ref{txfpg}), along with the Adian-Rabin construction (\cite[Theorem 2.4]{Chiodo}), one can show that given any finitely presented group $H$, and any ordinal $1\leq n \leq \omega,$ there is a finitely presented group of torsion length $n$ into which $H$ embeds. 
\end{rem}

We finish this section with an alternate construction for, and strengthening of, the result obtained as \cite[Theorem 3.3]{ChiVya}.

\begin{thm}\label{gen first paper}
 Define the sequence of finite presentations $P_{0}:=\langle - | - \rangle$, $P_{1}:=\langle t_{1} \ | \ t_{1}^{12}\rangle$, $P_{2}:=\langle t_{1}, t_{2} \ | \ (t_{1}^{12}t_{2})^{12}, t_{2}^{12}\rangle$, and, in general
 \[
 P_{n}:=\langle t_{1}, \ldots, t_{n} \ | \   (\cdots (t_{1}^{12}t_{2})^{12})\cdots t_{n})^{12}, \ldots, (t_{n-1}^{12}t_{n})^{12}, t_{n}^{12}\rangle
 \]
 Then $\overline{P}_{n}$ is $C'(1/6)$ (so word-hyperbolic and virtually special), $\overline{P}_{n}/\tor_{1}(\overline{P}_{n})\cong \overline{P}_{n-1}$, and $\torlen(\overline{P}_{n})=n$.
\end{thm}

\begin{proof}
 This follows immediately from Theorem \ref{exact tor quot}.
\end{proof}


\section{Quotients} \label{quotients}

We are interested in the universal torsion-free quotients of finitely presented groups. We begin with the following observation.

\begin{prop}
Let $G$ be a finitely presented group with infinite torsion length (see Theorem $\ref{fp hyp vspecialB}$). Then $G/\tor_{\omega}(G)$ is finitely generated and recursively presented, but \emph{not} finitely presented.
\end{prop}

\begin{proof}
 Assume $G/\tor_{\omega}(G)$ is finitely presented.  Then we have that $\tor_{\omega}(G)$ must be the normal closure of finitely many elements of $G$; say $\tor_{\omega}(G)$ $= \llangle g_{1}, \ldots, g_{n} \rrangle^{G}.$ But then each $g_{i}$ lies in some $\tor_{m_{i}}(G)$, and  as the  $\tor_{j}(G)$ form a  nested sequence we have that all the $g_{i}$ lie in $\tor_{M}(G)$ for $M=\max \{ m_{i} \}$. Thus $\tor_{\omega}(G) = \tor_{M}(G)$, and so $G$ has finite torsion length; a contradiction.
\end{proof}

With this in mind, we ask the following question:

\begin{ques}\label{our ques}
 Is there a finitely presented group $G$ for which $G/\tor_{1}(G)$ is recursively presented but \emph{not} finitely presented?
\end{ques}

Note that if such a group were to exist, then using the Adian-Rabin construction (\cite[Theorem 2.4]{Chiodo}) one could construct a group $G$ such that any sequence drawing from ``finitely presented''  and ``not finitely presented'' is realised by looking at the sequence $G/\tor_{1}(G)$, $G/\tor_{2}(G)$, $\ldots$. 

In the case of word-hyperbolic groups, however, it is always true that $G/\tor_{1}(G)$ is finitely presented, as we now show; moreover, in this context a finite presentation for $G/\tor_{1}(G)$ can be algorithmically constructed. We begin with a result of Papasoglu.

\begin{thm}[\cite{Papa}]\label{hyperbolic check}
There is a partial algorithm that, on input of a finite presentation $P$, halts if and only if $\overline{P}$ is a word-hyperbolic group. Moreover, when this algorithm does halt, it outputs a hyperbolicity constant $\delta$ for $P$.
\end{thm}

For a finitely generated group $G$ with finite generating set $X$, we define the ball of radius $r$ about the identity, $B_{X}(e,r)$, to be the set of elements
\[
B_{X}(e,r):= \{g \in G \ | \ \exists w \in X^{*} \textnormal{ with }|w|\leq r \textnormal{ and } \overline{w}=g \textnormal{ in } G\}.
\]

The following standard lemma will be of use; the proof of \cite[III.$\Gamma$ Theorem 3.2]{BriHae} provides an argument to verify it:

\begin{lem}\label{hyp fin2}
Let $G$ be a finitely presented word-hyperbolic group with hyperbolicity constant $\delta$. Then any finite subgroup $H \leq G$ is conjugate in $G$ to some subgroup in the $(4\delta + 2)$-ball around the origin. That is, there exists some $g\in G$ such that $g^{-1}Hg\subseteq B(e, 4\delta +2)$.
\end{lem}

\begin{thm}\label{hyp tor2}
Let $P=\langle X|R \rangle$ be a finite presentation of a word-hyperbolic group $G$ with hyperbolicity constant $\delta$. Let $S$ be the set
\[
 \{g \in \tor(G) \ | \  \langle g \rangle \subseteq B_{X}(e, 4\delta +2)\}
\]
Then $\llangle \tor(G) \rrangle^{G} = \llangle S \rrangle^{G}$. Moreover, from $P$ and $\delta$ we can explicitly compute the set $S$.
\end{thm}

\begin{proof}
Let $g$ be a torsion element in $G$. Then, by Lemma \ref{hyp fin2}, $\langle g\rangle$ is conjugate to a subgroup in the ball $B_{X}(e, 4\delta +2)$. Thus $\llangle \tor(G) \rrangle^{G} = \llangle S \rrangle^{G}$, and so the first statement is proved. 

Now, using the uniform solution to the word problem for hyperbolic groups (see \cite[III.$\Gamma$ Theorems 2.4--2.6]{BriHae}), we can identify a set of words (of length at most $r$) together representing all elements in $S$ as follows: enumerate all words of length at most $r$ in $X^{*}$; call these $w_{1}, \ldots, w_{k}$. For each $w_{i}$, compute minimal-length words for $w_{i}^{2}, w_{i}^{3},\ldots $  and so on until either some $\overline{w_{i}^{m}}$ lies outside $B_{X}(e, 4\delta +2)$ or is trivial. If, for $w_{i}$, the former occurs first, then discard $w_{i}$. If, for $w_{i}$, the latter occurs first, then add $w_{i}$ to our set. At the end of this process, we will have formed the set $S$, algorithmically from $P$ and $\delta$.
\end{proof}

Using Theorems \ref{exact tor quot} and \ref{hyp tor2}, we immediately see the following:

\begin{cor}
 Let $G$ be a finitely presented word-hyperbolic group. Then $G/\tor_{1}(G)$ is finitely presented. Moreover, \emph{any} finitely presented group $Q$ can be obtained as $Q \cong G/\tor_{1}(G)$ for $G$ some $C'(1/6)$ (and therefore word-hyperbolic) group $G$.
\end{cor}



\ 

\noindent \scriptsize{\textsc{Department of Pure Mathematics and Mathematical Statistics, University of Cambridge,
\\Wilberforce Road, Cambridge, CB3 0WB, UK. 
\\mcc56@cam.ac.uk
\vspace{5pt}
\\Department of Mathematics, Ben-Gurion University of the Negev,
\\P.O.B. 653, Beer-Sheva, 84105, Israel. 
\\vyas@math.bgu.ac.il}

\end{document}